\newtheorem{thm}{Theorem}
\newtheorem{lem}[thm]{Lemma}
\newtheorem{cor}[thm]{Corollary}
\newtheorem{rem}[thm]{Remark}
\newcommand{\e}{{\varepsilon}}
\newcommand{\cp}{C_{\varphi}}
\newcommand{\p}{\varphi}
\newcommand{\rd}{\mathrm{d}}
\newcommand{\sw}{\sigma_w}
\newcommand{\bc}{\mathbb{C}}
\newcommand{\bd}{\mathbb{D}}
\newcommand{\bt}{\mathbb{T}}
\newcommand{\ca}{\mathcal{CA}}
\newcommand{\cl}{\mathcal{L}}
\newcommand{\cb}{\mathcal{B}}
\newcommand{\al}{\mathcal{AL}}
\newcommand{\cm}{\mathcal{CM}}
\newcommand{\pe}{p,\eta}
\newcommand{\db}{\mathrm{dist}_{\cb_{{(3-\eta)}/{2}}}}
\begin{document}

\title {Analytic Campanato Spaces and Their Compositions}
\thanks{Jie Xiao was supported by NSERC of Canada and URP of Memorial University. Cheng Yuan
was supported by NSFC 11226086 of China  and Tianjin Advanced
Education Development Fund 20111005}

\author{Jie Xiao}
\address{Jie Xiao, Department of Mathematics and Statistics, Memorial University, St. John's, NL A1C 5S7, Canada}
\email{jxiao@mun.ca}

\author{Cheng Yuan}
\address{Cheng Yuan, Institute of Mathematics, School of Science, Tianjin University of Technology and Education
Tianjin 300222, China} \email{yuancheng1984@163.com}

\begin{abstract}
This paper is devoted to characterizing the analytic Campanato
spaces $\mathcal{AL}_{p,\eta}$ (including the analytic Morrey
spaces, the analytic John-Nirenberg space, and the analytic
Lipschitz/H\"older spaces) on the complex unit disk $\mathbb D$ in terms of the
M\"obius mappings and the Littlewood-Paley forms, and consequently
their compositions with the analytic self-maps of $\mathbb D$.
\end{abstract}
\keywords{Analytic Campanato spaces, bounded composition operators}

\subjclass[2000]{Primary 30H10, 30H25, 30H30, 30H35, 47A20, 47A25}

\maketitle

\tableofcontents

\section{Introduction}\label{s1}
\setcounter{equation}{0}

\subsection{Background}\label{s11} In partial differential equations and harmonic analysis over the unit circle $\bt$ of the complex plane $\mathbb C$, the
Campanato spaces (named after S. Campanato, cf. \cite{Cam1, Cam2})
$\mathcal L_{p,\eta}(\mathbb T)$ exist as an important family of
spaces that are determined via certain oscillations on $\bt$ and
hence generalize the John-Nirenberg space (named after F. John and
L. Nirenberg, cf. \cite{JN}) $BMO(\mathbb T)$ (bounded mean
oscillation on $\mathbb T$) and the Lipschitz spaces (named after R.
Lipschitz) or the H\"older spaces (named after O. H\"older)
$\mathrm{Lip}_\alpha(\mathbb T)$.

To see this picture clearly, referring to \cite[p. 215]{TO} we adapt
the following definition of the Campanato spaces. For $(p-1,\eta)\in
[0,\infty)\times[0,\infty)$ denote by
$\cl_{p,\eta}:=\cl_{p,\eta}(\bt)$ the $(p,\eta)$-Campanato space of
all functions $f\in L^p(\bt)$ with
$$
\|f\|_{{p,\eta}}=\sup_{I\subseteq \bt}\left(\frac1{|I|^\eta}\int_I
|f(\zeta)-f_I|^p\frac{|\rd \zeta|}{2\pi}\right)^\frac1p<\infty.
$$
In the above and below, $\sup_{I\subseteq \bt}$ means the supremum taken over all subarcs $I$ of $\bt$, and
$$
\begin{cases}
\zeta=e^{it};\\
\rd\zeta=ie^{it}\rd t;\\
|I|=(2\pi)^{-1}\int_I |\rd\zeta|;\\
f_I=(2\pi|I|)^{-1}\int_I f(\zeta)|\rd \zeta|.
\end{cases}
$$
It is easy to get that $\cl_{p,\eta}$ is a Banach space with
the norm
$$
\|f\|_p+\|f\|_{{p,\eta}}=\left(\int_{\bt}|f(\zeta)|^{p}\frac{|\rd\zeta|}{2\pi}\right)^\frac1p+\|f\|_{p,\eta},
$$
and enjoys the following space monotonicity
$$
\begin{cases}
\infty>\eta,\lambda\ge 0\\
\infty>p\ge q\ge 1\\
{(\eta-1)}/{p}\ge{(\lambda-1)}/{q}
\end{cases}
\Longrightarrow \cl_{p,\eta}\subseteq\cl_{q,\lambda}.
$$
Moreover, the following table shows the well-known relationship
among the above-mentioned function spaces (cf. \cite[pp. 65-70]{Gia}
or \cite{Cam1, Cam2, Mey, Sta, Mor, JN}):

\begin{center}
    \begin{tabular}{ | l | p{8cm} |}
    \hline
    Index $(p,\eta)$ & Campanato space $\mathcal{L}_{p,\eta}(\mathbb T)$\\ \hline
    $\eta=0$ & Lebesgue space $L^p(\mathbb T)$\\ \hline
    $\eta\in (0,1)$ & Morrey space $L^{p,1-\eta}(\mathbb T)$\\ \hline
    $\eta=1$ & John-Nirenberg space $BMO(\mathbb T)$ \\ \hline
    $\eta\in (1,1+p]$ & Lipschitz/H\"older space $\mathrm{Lip}_{{(\eta-1)}/{p}}(\mathbb T)$\\ \hline
    $\eta\in (1+p,\infty)$ & Space of constants $\mathbb C$\\ \hline
    \end{tabular}
\end{center}

\subsection{Overview} From the point of view of complex analysis in one variable, it is very natural to study the analytic extensions
$$
\mathcal{AL}_{p,\eta}:=\mathcal{L}_{p,\eta}\cap \hbox{Analytic\
Hardy\ space}\ H^p
$$
of the Campanato spaces on $\bt$ to the open unit disk $\mathbb D$,
where $f\in H^p:=H^p(\mathbb T)$ if and only if $f$ is analytic in
$\mathbb D$ and $\sup_{r\in (0,1)}\|f(r\cdot)\|_{p}<\infty$. Evidently, $\mathcal{AL}_{p,\eta}$ is a Banach space under the norm $\|\cdot\|_p+\|\cdot\|_{p,\eta}$, and has the following structure diagram:

\begin{center}
    \begin{tabular}{ | l | p{8cm} |}
    \hline
    Index $(p,\eta)$ & Analytic Campanato space $\mathcal{AL}_{p,\eta}$\\ \hline
    $\eta=0$ & Analytic Hardy space $H^p$\\ \hline
    $\eta\in (0,1)$ & Analytic Morrey space $AL^{p,1-\eta}$\\ \hline
    $\eta=1$ & Analytic John-Nirenberg space $BMOA$ \\ \hline
    $\eta\in (1,1+p]$ & Analytic Lipschitz/H\"older space $\mathrm{ALip}_{{(\eta-1)}/{p}}$\\ \hline
    $\eta\in (1+p,\infty)$ & Space of constants $\mathbb C$\\ \hline
    $p\ge q\ \&\ \frac{\eta-1}{p}\ge\frac{\lambda-1}{q}$ & $\mathcal{AL}_{p,\eta}\subseteq\mathcal{AL}_{q,\lambda}$\\ \hline
    \end{tabular}
\end{center}
As far as we know, the following papers \cite{WX, X1, XX, CFO} have touched some aspects of these analytic Campanato spaces. Thus, such an area deserves in the highest degree to be developed further. In this paper, we will establish the following assertion:

\begin{thm}\label{t11} For $w\in\bar{\mathbb D}=\mathbb D\cup\mathbb T$ let $z\mapsto \sigma_w(z)=(w-z)(1-\bar{w}z)^{-1}$ be a M\"obius self-map of $\bar{\mathbb D}$.

\item{\rm(i)} For $0<\eta<2<1+p<\infty$, an analytic function $f$ on $\mathbb D$ belongs to $\mathcal{AL}_{p,\eta}$ if and only if
$$
\|f\|_{\mathcal{AL}_{p,\eta,\ast}}:=\sup_{w\in\mathbb
D}(1-|w|^2)^\frac{1-\eta}{p}\|f\circ\sigma_w-f(w)\|_p<\infty
$$
if and only if
$$
\|f\|_{\mathcal{AL}_{p,\eta,\star}}:=\sup_{I\subseteq\bt}\left( \frac1{|I|^\eta} \int_I
\left(\int_{1-|I|}^1 |f'(r \zeta)|^2 (1-r)\rd
r\right)^\frac{p}{2}\,\frac{|\rd\zeta|}{2\pi}\right)^\frac{1}{p}<\infty.
$$

\item{\rm(ii)} For $0<\eta\le 1+p<\infty$, $1<p<\infty$ and $\alpha=(p+1-\eta)/p$, $\mathcal{AL}_{p,\eta}$ is contained in the Bloch type space
$$
\mathcal{B}_\alpha=\Big\{f:\ f\ \hbox{is\ analytic\ in}\ \mathbb D\
\hbox{with}\ \|f\|_{\mathcal
B_{\alpha}}:=\sup_{w\in\bd}(1-|w|^2)^\alpha|f'(w)|<\infty\Big\}.
$$
In particular, for $p=2$ and $f\in\mathcal{B}_\alpha$, one has
$$
\inf_{g\in\mathcal{AL}_{p,\eta}}\|f-g\|_{\mathcal
B_{\alpha}}=\inf\Big\{\e :\ \chi_{\Omega_\e
(f)}(1-|z|^2)^{\eta-2}\rd A(z)\ \ \hbox{is\ in}\
\mathcal{CM}_\eta\Big\},
$$
where
$$
\begin{cases}
\Omega_\e (f)=\{z\in\bd: (1-|z|^2)^{{(1+p-\eta)}/{p}}|f'(z)|\ge\e\};\\
\chi_{\Omega_\e(f)}\ \ \hbox{is\ the\ characteristic\ function\ of}\ \Omega_\e(f);\\
\rd A(z)\ \ \hbox{is\ the\ normalized\ area\ measure\ on}\ \ \bd;\\
\mathcal{CM}_\eta\ \ \hbox{is\ the\ class\ of\ all}\
\eta\hbox{-Carleson\ measures\ on}\ \mathbb D.
\end{cases}
$$

\item{\rm(iii)} For $0<\eta,\lambda<2=q\le p<\infty$, an analytic self-map $\p $ of $\mathbb D$, and the analytic composition $C_\p f=f\circ\p$, one has:
$$
\|C_\p f\|_{\mathcal{AL}_{q,\lambda,\ast}}\lesssim\|f\|_{\mathcal{AL}{p,\eta,\ast}}\quad\forall\quad f\in \mathcal{AL}_{p,\eta}
$$
if and only if
$$
\sup_{w\in
\bd}\frac{(1-|w|^2)^{(1-\lambda)/q}}{(1-|\p(w)|^2)^{(1-\eta)/p}}
\|\sigma_{\p(w)}\circ\p\circ\sw\|_{q}<\infty.
$$

\item{\rm(iv)} For $0<\eta<1+p<\infty$, $0<\alpha, p-1<\infty$, an analytic self-map $\p $ of
$\mathbb D$, and the analytic composition $C_\p  f=f\circ\p$, one has:
$$
\|C_\p f\|_{\mathcal{B}_\alpha}\lesssim\|f\|_{\mathcal{AL}_{p,\eta,\star}}\quad\forall\quad f\in \mathcal{AL}_{p,\eta}
$$
respectively
$$
\|C_\p f\|_{\mathcal{AL}_{p,\eta,\star}}\lesssim\|f\|_{\mathcal{B}_\alpha}\quad\forall\quad f\in \mathcal{B}_\alpha
$$
if and only if
$$
\sup_{w\in\mathbb D}\frac{(1-|w|^2)^\alpha|\p'(w)|}{(1-|\p
(w)|^2)^\frac{p+1-\eta}{p}}<\infty
$$
respectively
$$
\sup_{I\subseteq\bt}\left(\frac1{|I|^\eta} \int_I
\left(\int_{1-|I|}^1 |\p'(r \zeta)|^2 (1-r)^{1-2\alpha}\rd
r\right)^\frac{p}{2}\,\frac{|\rd\zeta|}{2\pi}\right)^\frac{1}{p}<\infty.
$$
\end{thm}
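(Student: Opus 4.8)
The plan is to take part (i) as the foundation on which (ii)--(iv) all rest, since each of the later parts reads off whichever of the three descriptions of $\al_{p,\eta}$ is most convenient. To pass between the M\"obius form $\|\cdot\|_{\al_{p,\eta,\ast}}$ and the Littlewood--Paley form $\|\cdot\|_{\al_{p,\eta,\star}}$, I would fix $w\in\bd$, apply the square-function (Lusin area) description $\|g-g(0)\|_p\approx\|S(g)\|_{L^p(\bt)}$ to $g=f\circ\sw-f(w)$, and then transport the area integral back to the original coordinates via the conformal invariance $|\sw'(z)|^2\,\rd A(z)=\rd A(\sw(z))$. The factor $(1-|w|^2)^{(1-\eta)/p}$ turns into the $|I|^{-\eta/p}$ normalization once $w$ is tied to the point $(1-|I|)\zeta_I$ attached to a subarc $I$, so that $\sup_{w\in\bd}$ and $\sup_{I\subseteq\bt}$ become comparable and the radial square function over $I$ matches the area function over the associated Carleson box. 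To close the equivalence with the defining oscillation I would compare $|I|^{-\eta}\int_I|f-f_I|^p$ with the M\"obius average $\|f\circ\sw-f(w)\|_p^p$ through the same arc-to-box dictionary, absorbing the change of reference value from $f_I$ to $f(w)$ by a mean-value comparison; the hypothesis $0<\eta<2<1+p$ keeps both quantities in their convergent range.

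Part (ii) splits into an easy embedding and a hard distance formula. The embedding is immediate from the M\"obius form: since $(f\circ\sw-f(w))'(0)=-(1-|w|^2)f'(w)$ and the first Taylor coefficient of an $H^p$ function is dominated by its norm, one gets $(1-|w|^2)^{1+(1-\eta)/p}|f'(w)|\lesssim\|f\circ\sw-f(w)\|_p$, and $1+(1-\eta)/p=\alpha$ yields $\|f\|_{\cb_\alpha}\lesssim\|f\|_{\al_{p,\eta,\ast}}$. For the $p=2$ distance formula, the inequality ``distance $\ge$ Carleson threshold'' is the soft half: if $\|f-g\|_{\cb_\alpha}<\e$ with $g\in\al_{2,\eta}$ then on $\Omega_\e(f)$ one has $|g'|\gtrsim|f'|$, and because $|g'(z)|^2(1-|z|^2)\,\rd A$ is an $\eta$-Carleson measure (the $\star$-form with $p=2$) while $(1-|z|^2)^{1-2\alpha}=(1-|z|^2)^{\eta-2}$, the measure $\chi_{\Omega_\e(f)}(1-|z|^2)^{\eta-2}\,\rd A$ is dominated by it and hence lies in $\cm_\eta$. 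The reverse half is the genuinely hard construction: given $\e$ for which that measure is in $\cm_\eta$, I would produce $g\in\al_{2,\eta}$ with $\|f-g\|_{\cb_\alpha}\lesssim\e$ by solving a $\bar\partial$-equation with data built from $f'\chi_{\Omega_\e(f)}$, the Carleson hypothesis controlling the solution in the Hilbert-space ($p=2$) setting.

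For part (iii) I would return to the M\"obius form and factor $\p\circ\sw=\sigma_{\p(w)}\circ\psi_w$ with $\psi_w=\sigma_{\p(w)}\circ\p\circ\sw$ fixing $0$, so that $\cp f\circ\sw-f(\p(w))=\big(f\circ\sigma_{\p(w)}-f(\p(w))\big)\circ\psi_w$. Necessity follows by testing with $f=\sigma_{\p(w)}$, whose inner factor collapses to the identity and reproduces $\|\psi_w\|_q=\|\sigma_{\p(w)}\circ\p\circ\sw\|_q$, after checking $\|\sigma_{\p(w)}\|_{\al_{p,\eta,\ast}}\lesssim1$ uniformly. Sufficiency uses $q=2$ decisively: I would write $\|G\circ\psi_w\|_2^2$ as an area integral, change variables through the counting function $N_{\psi_w}$, and read the finiteness of the stated supremum as a Carleson embedding that bounds $\|(f\circ\sigma_{\p(w)}-f(\p(w)))\circ\psi_w\|_2$ by the already-controlled $\|f\circ\sigma_{\p(w)}-f(\p(w))\|_p$. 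In part (iv) the first equivalence is quickest: using the embedding from (ii) and the pointwise estimate $|f'(z)|\le\|f\|_{\cb_\alpha}(1-|z|^2)^{-\alpha}$, boundedness $\al_{p,\eta}\to\cb_\alpha$ reduces to the single supremum over $w$, necessity coming from test functions in $\al_{p,\eta}$ that saturate the Bloch bound at $\p(w)$. For the second equivalence I would combine the same pointwise Bloch estimate with the Schwarz--Pick inequality $1-|\p(z)|^2\ge(1-|z|^2)|\p'(z)|$ to pass from the image weight $(1-|\p|^2)^{-2\alpha}$ to a weight in $(1-r)$, aiming to match the integrand $|\p'(r\zeta)|^2(1-r)^{1-2\alpha}$, with necessity again supplied by extremal Bloch functions.

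I expect two main obstacles. The first is the constructive half of the distance formula in (ii): manufacturing, for every $\e$ above the claimed infimum, an explicit near-best approximant in $\al_{2,\eta}$, which forces the $\bar\partial$/Carleson-measure machinery and is exactly where the Hilbert-space structure at $p=2$ is indispensable. The second is the sufficiency in (iii) and the second equivalence of (iv), where a supremum phrased through pulled-back M\"obius or Littlewood--Paley data must be converted into an honest Carleson-type embedding; in (iv) the delicate point is to see that the purely geometric weight $(1-r)^{1-2\alpha}$, carrying no explicit dependence on $\p$, produces a condition equivalent to the one coming from the pointwise Bloch estimate, which I would reconcile through the exponent bookkeeping in Schwarz--Pick together with the arc-to-box dictionary of part (i) and a covering argument.
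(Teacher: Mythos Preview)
Your outline contains two concrete errors.

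In part~(iii), the test function $f=\sigma_{\p(w)}$ does \emph{not} satisfy $\|\sigma_{\p(w)}\|_{\al_{p,\eta,\ast}}\lesssim1$ uniformly when $\eta>1$: a direct computation gives $\|\sigma_a\|_{\al_{p,\eta,\ast}}\approx(1-|a|^2)^{(1-\eta)/p}$, which blows up as $|a|\to1$. Your argument can be salvaged by tracking this exact scale rather than claiming uniform boundedness, but the paper sidesteps the issue by testing with the normalized kernels $F_b(z)=(1-|b|^2)^{(p+\eta-1)/p}/(1-\bar bz)$, which \emph{are} uniformly in $\al_{p,\eta}$. For sufficiency the paper isolates a separate inequality $\|g\circ\psi\|_p\lesssim\|g\|_p\,\|\psi\|_p^{2/p}$ valid for $g(0)=\psi(0)=0$ and $p\ge2$, proved via the Nevanlinna counting function; your counting-function sketch aims at the same estimate.

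In part~(iv), the Schwarz--Pick route for the second equivalence fails at the level of exponents: from $1-|\p(z)|^2\ge(1-|z|^2)|\p'(z)|$ one obtains
\[
(1-|\p(z)|^2)^{-2\alpha}\,|\p'(z)|^2\le(1-|z|^2)^{-2\alpha}\,|\p'(z)|^{2-2\alpha},
\]
so the integrand you recover is $|\p'|^{2-2\alpha}(1-r)^{1-2\alpha}$, not the stated $|\p'|^{2}(1-r)^{1-2\alpha}$. The paper's necessity argument avoids Schwarz--Pick entirely and instead invokes two functions $f_1,f_2\in\cb_\alpha$ with $|f_1'(z)|^2+|f_2'(z)|^2\approx(1-|z|^2)^{-2\alpha}$ throughout $\bd$; summing $\|C_\p f_j\|_{\al_{p,\eta,\star}}^p$ over $j=1,2$ then produces the desired integrand directly.

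Two further points of comparison, where your route differs from the paper's without being wrong in principle. For the distance formula in~(ii) the paper does not solve a $\bar\partial$-problem; it splits the reproducing integral
\[
f(z)=f(0)+C\int_\bd\frac{(1-|w|^2)^{(3-\eta)/2}f'(w)}{\bar w\,(1-\bar wz)^{1+(3-\eta)/2}}\,\rd A(w)
\]
over $\Omega_\e(f)$ and its complement, and a Carleson-measure operator estimate (the paper's Lemma~\ref{l31}) puts the first piece in $\al_{2,\eta}$ while the second has $\cb_\alpha$-norm $\lesssim\e$. For part~(i) the paper does not transport the square function through $\sw$: the Littlewood--Paley characterization is derived from the oscillation definition via the decomposition $f=f_I+(f-f_I)\chi_{2I}+(f-f_I)\chi_{\bt\setminus2I}$, with the converse handled by duality against $(p',\eta)$-atoms when $\eta\le1$ and by a subharmonic mean-value bound when $1<\eta<1+p$. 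Note also that the $\star$-condition is stated with a \emph{radial} $g$-function over $[1-|I|,1)$, not a Lusin area integral over a cone, so your conformal-transport plan would still owe an extra equivalence between the two square functions.
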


\begin{proof}[Notation] As we go along with proving Theorem \ref{t11} in \S \ref{s2}-\S\ref{s3}-\S\ref{s4}, we will introduce the required notation. But here, we only use
$U\lesssim V$ (or $V\gtrsim U$) as $U\le c V$ for a positive
constant $c$, and moreover write $U\approx V$ for both $U\lesssim V$ and
$V\lesssim U$.
\end{proof}

\section{Harmonic extensions of Campanato spaces}\label{s2}
\setcounter{equation}{0}

\subsection{Representation via M\"obius transforms}\label{s21}

For $f\in L^1(\bt)$ and $z\in\mathbb D$, let $P_z(\zeta)=(1-|z|^2)/
|\zeta-z|^{2}$ be the Poisson kernel at $z$ and $Pf$ be the Poisson
extension of $f$, that is
$$
Pf(z)=\frac1{2\pi}\int_\bt f(\zeta)P_z(\zeta)\,|\rd
\zeta|=\frac1{2\pi}\int_0^{2\pi}\Re(\frac{e^{i\theta}+z}{e^{i\theta}-z})f(e^{i\theta})\rd
\theta.
$$
As an extension of Theorem 3.2.1 in \cite{X1}, we have the following
M\"obius type representation of a Campanato space, whence reaching
the first part of Theorem \ref{t11} (i).

\begin{thm}\label{t21} Let $0<\eta<2\le 1+p<\infty$.

\item{\rm(i)} For an $L^p$ function $f$, $f\in \cl_{\pe}$ if and only if
\begin{equation*}
\|f\|_{\pe,*}:=\sup_{w\in\bd}\left((1-|w|^2)^{1-\eta}\int_\bt
\left|f\circ\sigma_w(\zeta)- Pf(w)\right|^p\,|\rd
\zeta|\right)^\frac1p<\infty.
\end{equation*}

\item{\rm(ii)} For an $H^p$ function $f$, $f\in \mathcal{AL}_{\pe}$ if and only if $\|f\|_{\mathcal{AL}_{p,\eta,\ast}}<\infty$.
\end{thm}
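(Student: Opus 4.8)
The plan is to pivot on two elementary facts about the Möbius map $\sw$. First, on $\bt$ a direct computation gives $|\sw'(\xi)|=(1-|w|^2)/|\xi-w|^2=P_w(\xi)$, so (since $\sw$ is an involution) the substitution $\zeta=\sw(\xi)$ turns the Möbius oscillation into a Poisson-weighted oscillation,
$$
\int_\bt |f\circ\sw(\xi)-c|^p\,|\rd\xi|=\int_\bt |f(\zeta)-c|^p\,P_w(\zeta)\,|\rd\zeta|\qquad(c\in\bc).
$$
Second, taking $c$ equal to the mean shows that $Pf(w)$ is exactly the $\sw$-average of $f\circ\sw$, i.e.\ the analogue of the arc-average $f_I$. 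With these I would prove (i) as a two-sided estimate between $\|f\|_{\pe,*}$ and $\|f\|_{\pe}$, and then read off (ii) at once, because $Pf(w)=f(w)$ when $f\in H^p$ is analytic.

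The implication $\|f\|_{\pe,*}<\infty\Rightarrow f\in\cl_{\pe}$ is the easier half. Given an arc $I\subseteq\bt$ with centre $\zeta_0$, I would test the Möbius norm at $w=(1-|I|)\zeta_0$; on $I$ the Poisson kernel satisfies $P_w\approx|I|^{-1}$, whence
$$
\frac1{|I|^\eta}\int_I|f-f_I|^p\,|\rd\zeta|\lesssim\frac1{|I|^\eta}\int_I|f-Pf(w)|^p\,|\rd\zeta|\lesssim|I|^{1-\eta}\int_\bt|f-Pf(w)|^p\,P_w\,|\rd\zeta|\approx\|f\|_{\pe,*}^p,
$$
where the first step uses that the mean $f_I$ minimises $L^p$-oscillation up to a fixed constant, and the last step combines the change-of-variables identity with $1-|w|^2\approx|I|$.

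The principal obstacle is the reverse implication $f\in\cl_{\pe}\Rightarrow\|f\|_{\pe,*}<\infty$. Fixing $w$, setting $h=1-|w|$ and $\zeta_0=w/|w|$, I would split $\bt$ along dyadic arcs $I_k$ centred at $\zeta_0$ with $|I_k|\approx 2^k h$ (up to $|I_N|\approx1$), on which $|\zeta-w|\approx 2^kh$ and $P_w\approx 2^{-2k}h^{-1}$. Replacing the single constant $Pf(w)$ by the scale-adapted averages $f_{I_k}$ yields
$$
\int_\bt|f-Pf(w)|^p\,P_w\,|\rd\zeta|\lesssim\sum_{k}\frac1{2^{2k}h}\Big(\int_{I_k}|f-f_{I_k}|^p\,|\rd\zeta|+|I_k|\,|f_{I_k}-Pf(w)|^p\Big).
$$
The first summand is handled directly by the Campanato condition, $\int_{I_k}|f-f_{I_k}|^p\lesssim|I_k|^\eta\|f\|_{\pe}^p$, and after multiplication by $(1-|w|^2)^{1-\eta}$ the resulting series $\sum_k 2^{k(\eta-2)}$ converges precisely because $\eta<2$. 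The delicate point is the second summand, where one must control the cross-scale average differences. I would telescope with the standard bound $|f_{I_j}-f_{I_{j+1}}|\lesssim|I_j|^{(\eta-1)/p}\|f\|_{\pe}$ together with a separate estimate $|Pf(w)-f_{I_0}|\lesssim h^{(\eta-1)/p}\|f\|_{\pe}$, obtained by the same dyadic splitting of the Poisson integral; here summability hinges on $\eta<1+p$, guaranteed by $\eta<2\le1+p$. The three regimes $\eta<1$, $\eta=1$ (logarithmic growth, the $BMO$ case), and $1<\eta<2$ require slightly different accounting of the telescoped sum, but in each the geometric factor $2^{-k}$ coming from the Poisson kernel dominates, so the series converges and delivers $\|f\|_{\pe,*}\lesssim\|f\|_{\pe}$.

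Finally, part (ii) follows from part (i) with no extra work: for analytic $f\in H^p$ one has $Pf(w)=f(w)$, so that $\|f\|_{\pe,*}$ coincides with $\|f\|_{\mathcal{AL}_{p,\eta,\ast}}$ up to the harmless $2\pi$-normalisation of the measure; intersecting the characterisation in (i) with $H^p$ then shows $f\in\mathcal{AL}_{\pe}$ if and only if $\|f\|_{\mathcal{AL}_{p,\eta,\ast}}<\infty$.
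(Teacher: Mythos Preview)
Your proposal is correct and follows essentially the same route as the paper: the easy direction tests at $w=(1-|I|)\zeta_0$ and uses $P_w\approx|I|^{-1}$ on $I$, while the hard direction performs a dyadic decomposition of $\bt$ into arcs $I_k=2^kI_w$ around $w/|w|$, controls $\int_{I_k}|f-f_{I_k}|^p$ by the Campanato norm, telescopes the average differences $|f_{I_j}-f_{I_{j+1}}|\lesssim|I_j|^{(\eta-1)/p}\|f\|_{\pe}$, and sums the resulting geometric series using $\eta<2$; part (ii) is then immediate from $Pf=f$ for $f\in H^p$.

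The only organisational difference is that the paper opens the hard direction with the Jensen-type reduction $\int_\bt|f-Pf(w)|^pP_w\,|\rd\zeta|\lesssim\int_\bt|f-f_{I_w}|^pP_w\,|\rd\zeta|$ (valid since $Pf(w)$ is the Poisson average of $f$), which lets one work with the \emph{single} constant $f_{I_0}$ throughout and dispenses with your separate estimate for $|Pf(w)-f_{I_0}|$; the final series then takes the uniform shape $\sum_{j\ge0}2^{-j}j^p(1+2^{j(\eta-1)})$, converging for all $0<\eta<2$ without splitting into the three regimes you mention.
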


\begin{proof} (i) Suppose $f\in \cl_{\pe}$. For each nonzero
$w\in\bd$, let $I_w$ be the subarc of $\bt$ with center $w/|w|$ and
length $1-|w|$; and for $w=0$, let $I_w=\bt$. Moreover, for
$j=0,1,...,n-1$, let $J_j=2^j I_w$, where $n$ is the smallest
natural number such that $2^n|I_w|\ge1$. Obviously, we may assume
$J_n=\bt$. Under this circumstance, we have that for a given point
$w\in\bd$,
\begin{eqnarray*}
&&
\int_\bt|f(\zeta)-Pf(w)|^p\frac{1-|w|^2}{|\zeta-w|^2}\,|\rd \zeta|\\
&&\lesssim\int_\bt|f(\zeta)-f_{I_w}|^p\frac{1-|w|^2}{|\zeta-w|^2}\,|\rd \zeta|\\
&&\lesssim\left(\int_{J_0}+\sum_{j=0}^{n-1}\int_{J_{j+1}\setminus
J_j}
\right)|f(\zeta)-f_{I_w}|^p\frac{1-|w|^2}{|\zeta-w|^2}\,|\rd \zeta|\\
&&\lesssim
(1-|w|^2)^{-1}\left(\int_{J_0}+\sum_{j=0}^{n-1}2^{-2j}\int_{J_{j+1}\setminus
J_{j}}
\right)|f(\zeta)-f_{I_w}|^p\,|\rd \zeta|\\
&&\lesssim\frac1{|J_0|}\int_{J_0}|f(\zeta)-f_{I_w}|^p |\rd \zeta|+
\sum_{j=0}^{n-1}\frac{2^{-j}}{|J_{j+1}|}\int_{J_{j+1}\setminus J_j}
|f(\zeta)-f_{I_w}|^p\,|\rd \zeta|.\\
\end{eqnarray*}
Using H\"{o}lder's inequality and $f\in \cl_{\pe}$, we get
\begin{align*}
&|f_{J_{j+1}}-f_{J_j}|\\
&\lesssim\frac1{|J_{j+1}|}\int_{J_{j+1}}|f(\zeta)-f_{I_w}||\rd \zeta|\\
&\lesssim\left(\frac2{|J_{j+1}|}\int_{J_{j+1}}|f(\zeta)-f_{I_w}|^p|\rd \zeta|\right)^\frac1{p}\\
&\lesssim |J_{j+1}|^{\frac{\eta-1}{p}}\|f\|_{\pe}\\
&\lesssim 2^{(\eta-1)(j+1)/p}(1-|w|)^{\frac{\eta-1}{p}}\|f\|_{\pe},
\end{align*}
whence reaching
\begin{align*}
&|f_{J_{j+1}}-f_{J_0}|\\
&\lesssim |f_{J_{j+1}}-f_{J_j}|+\cdots+|f_{J_{1}}-f_{J_0}|\\
&\lesssim j(1+2^{j(\eta-1)/p})(1-|w|)^{\frac{\eta-1}{p}}\|f\|_{\pe}.
\end{align*}
Accordingly,
\begin{align*}
&\frac{1}{|J_{j+1}|}\int_{J_{j+1}}|f(\zeta)-f_{J_0}|^p\,|\rd \zeta|\\
&\le\left(\left(\frac{1}{|J_{j+1}|}\int_{J_{j+1}}
|f(\zeta)-f_{J_{j+1}}|^p\,|\rd \zeta|\right)^{1/p}+ |f_{J_{j+1}}-f_{J_0}| \right)^p\\
&\lesssim j^p(1+2^{j(\eta-1)})(1-|w|)^{{\eta-1}}\|f\|^p_{\pe}.
\end{align*}
Putting the above estimates together, we obtain
\begin{align*}
&\int_\bt
\left|f\circ\sigma_w(\zeta)-Pf(w)\right|^p\,|\rd \zeta|\\
&\approx\int_\bt
\left|f(\zeta)- P f(w)\right|^p\frac{1-|w|^2}{|\zeta-w|^2}\,|\rd \zeta|\\
&\lesssim(1-|w|)^{{\eta-1}}\|f\|^p_{\pe}\sum_{j\ge 0}
2^{-j}{j^p(1+2^{j(\eta-1)})}.
\end{align*}
Since $0<\eta<2\le 1+p$, we have $\|f\|_{\pe,*}\lesssim
\|f\|_{\pe}.$

Conversely, if $\|f\|_{\pe,*}<\infty$, then for any subarc
$I\subseteq\bt$, we take such a $w\in\bd$ that $w/|w|$ is the center
of $I$ and $|w|=1-|I|$, to get
$$
|\zeta-w|\lesssim 1-|w|^2\quad\forall\quad \zeta\in I,
$$
and then
\begin{align*}
&\frac{1}{|I|^\eta}\int_{I}
|f(\zeta)-f_{I}|^p\,|\rd \zeta|\\
&\lesssim\frac{1}{|I|^\eta}\int_{I}
|f(\zeta)-Pf(w)|^p\,|\rd \zeta|\\
&\lesssim(1-|w|^2)^{1-\eta}\int_{I}
|f(\zeta)-Pf(w)|^p\frac{1-|w|^2}{|\zeta-w|^2}\,|\rd \zeta|\\
&\lesssim\|f\|^p_{\pe,*},
\end{align*}
as desired.

(ii) This follows from (i) and the fact that that $Pf=f$ holds for
all $f\in H^p$.
\end{proof}

Given $z\in\mathbb D$ and $1<p<\infty$. Upon writing the Szeg\"{o}
projection $S$ of $f\in L^p(\bt)$:
\begin{align*}Sf(z)=\frac{1}{2\pi}\int_0^{2\pi} \frac{f(e^{i\theta})}{1-ze^{-i\theta}}
\rd\theta,
\end{align*}
and $\tilde f$ the conjugate operator of $f$:
$$
\begin{cases}
\tilde
f(z)=\frac1{2\pi}\int_0^{2\pi}\Im(\frac{e^{i\theta}+z}{e^{i\theta}-z})f(e^{i\theta})\rd
\theta\\
\hbox{with}\\
\tilde f(e^{i\theta})=\lim_{r\to1^-}\tilde f(r e^{i\theta}),
\end{cases}
$$
one has
\begin{align*}
i\tilde f(z)+Pf(z)=2Sf(z)-Pf(0)
\end{align*}

Closely related to Peetre's \cite[Theorem 1.1]{Pee} (cf. \cite{PeeJFA}), the following result extends the
$BMO$ space case (see \cite[p. 272]{Zhu}) and the Lipschitz space
case (Privalov's theorem, see \cite[p. 214]{TO}).

\begin{cor} Let $0<\eta<2<1+p<\infty$. The Szeg\"{o} projection $S$ maps $\cl_{\pe}$ boundedly
onto $\al_{\pe}$. Equivalently, the conjugate operator is bounded on
$\cl_{\pe}$.
\end{cor}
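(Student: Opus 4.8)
The plan is to pivot on the identity $i\tilde f(z)+Pf(z)=2Sf(z)-Pf(0)$ displayed just above, which on $\bt$ (where $Pf=f$ a.e.) reads $Sf=\tfrac12\bigl(f+i\tilde f+Pf(0)\bigr)$. Since the seminorm $\|\cdot\|_{\pe}$ annihilates additive constants, this gives at once $\|Sf\|_{\pe}\le\tfrac12\bigl(\|f\|_{\pe}+\|\tilde f\|_{\pe}\bigr)$ and, reading the identity backwards, $\|\tilde f\|_{\pe}=\|2Sf-f\|_{\pe}\le 2\|Sf\|_{\pe}+\|f\|_{\pe}$; because $1<p<\infty$, M. Riesz's theorem makes both $S$ and $f\mapsto\tilde f$ bounded on $L^p(\bt)$, so the full-norm statements follow from the seminorm ones. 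Hence the boundedness of $S$ on $\cl_{\pe}$ is equivalent to that of the conjugation operator, and I only need to prove one of them. For surjectivity I would use that $Sg=g$ for every $g\in H^p$, so once $S(\cl_{\pe})\subseteq\al_{\pe}$ is in hand, $S$ restricted to $\al_{\pe}=\cl_{\pe}\cap H^p$ is the identity, and $S$ maps $\cl_{\pe}$ onto $\al_{\pe}$.

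It therefore remains to show $\|\tilde f\|_{\pe}\lesssim\|f\|_{\pe}$, and here I would run the argument through the M\"obius representation $\|\cdot\|_{\pe,*}$ of Theorem \ref{t21}(i), which is equivalent to $\|\cdot\|_{\pe}$. The decisive observation is that conjugation commutes with M\"obius composition up to an additive constant: if $F=Pf+i\tilde f$ is the analytic completion normalized by $\tilde f(0)=0$, then $F\circ\sw$ is analytic with $\Re(F\circ\sw)=P(f\circ\sw)$, so the conjugate of $f\circ\sw$ (again normalized to vanish at $0$) is $\Im(F\circ\sw)-\Im(F\circ\sw)(0)=\tilde f\circ\sw-\tilde f(w)$, using $\sw(0)=w$. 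Since $P\tilde f(w)=\tilde f(w)$, passing to boundary values yields the pointwise identity $\tilde f\circ\sw-P\tilde f(w)=\widetilde{f\circ\sw}$ on $\bt$.

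With this in place the estimate is immediate. The pushforward of normalized arc length under $\sw$ is the harmonic measure at $w$, so the mean of $f\circ\sw$ over $\bt$ equals $Pf(w)$; subtracting this constant (which conjugation kills) and applying M. Riesz's $L^p$ inequality for each fixed $w$ gives
\[
\int_\bt\bigl|\widetilde{f\circ\sw}\bigr|^p\,|\rd\zeta|\lesssim\int_\bt\bigl|f\circ\sw-Pf(w)\bigr|^p\,|\rd\zeta|.
\]
Multiplying by $(1-|w|^2)^{1-\eta}$ and taking the supremum over $w\in\bd$ shows $\|\tilde f\|_{\pe,*}\lesssim\|f\|_{\pe,*}$, and Theorem \ref{t21}(i) converts this to $\|\tilde f\|_{\pe}\lesssim\|f\|_{\pe}$. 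Combined with the first paragraph and the analyticity of $Sf$ (whence $Sf\in H^p\cap\cl_{\pe}=\al_{\pe}$), this completes the proof.

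I expect the only genuinely delicate point to be the commutation identity of the second paragraph: one must verify that the harmonic conjugate of $Pf\circ\sw$ is exactly $\tilde f\circ\sw-\tilde f(w)$, which rests on the conformal invariance of the analytic completion together with careful bookkeeping of the normalizing constants $\tilde g(0)=0$. All such constants are harmless for $\|\cdot\|_{\pe}$, but tracking them correctly is where the care lies; the remaining ingredients (M. Riesz on $L^p$, the harmonic-measure mean-value formula, and Theorem \ref{t21}) are standard or already available.
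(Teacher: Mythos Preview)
Your proof is correct and follows essentially the same route as the paper: both arguments pass through the M\"obius characterization of Theorem~\ref{t21}(i) and then apply M.~Riesz's $L^p$ inequality after composing with $\sw$. The only cosmetic difference is the order of the two reductions---the paper writes $Sf=u+iv$ and bounds $\|Sf\circ\sw-Sf(w)\|_p$ by $\|u\circ\sw-u(w)\|_p$ (using that an analytic function vanishing at the origin is $L^p$-controlled by its real part), then reads off boundedness of $\tilde f$ afterwards; you instead isolate the commutation identity $\widetilde{f\circ\sw}=\tilde f\circ\sw-\tilde f(w)$ first and bound $\tilde f$ directly, deducing $S$ afterwards. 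Your version makes the conformal-invariance step and the constant bookkeeping more explicit, which is a gain in clarity, but the mathematical content of the two arguments is the same.
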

\begin{proof} For $f\in\cl_{\pe}\subset L^p$, write the
Szeg\"o projection of $f$ as $S(f)=u+iv$, where $u$ and $v$ are the
real and imaginary parts respectively. Then
\begin{align*}
(Sf)\circ\sw=u\circ\sw+iv\circ\sw.
\end{align*}
This means that $u$ is the Poisson extension of $f$. According to
the well known result: $L^p$ boundedness of $S:L^p\to H^p$ when
$1<p<\infty$ (see, e.g. Theorem 9.6 in \cite{Zhu}), we have
\begin{align*}\|Sf\circ\sw-Sf(w)\|_p\lesssim\|u\circ\sw-u(w)\|_p.
\end{align*}
Thus
\begin{align*}(1-|w|^2)^{1-\eta}\|Sf\circ\sw-Sf(w)\|_p^p
\lesssim(1-|w|^2)^{1-\eta}\|u\circ\sw-u(w)\|_p^p.
\end{align*}
Note that $u|_{\bt}=f$, then $Sf\in\al_{\pe}$ thanks to Theorem \ref{t11} (i). So one gets
\begin{align*}
&(1-|w|^2)^{1-\eta}\|v\circ\sw-v(w)\|_p^p\\
&\lesssim(1-|w|^2)^{1-\eta}\|Sf\circ\sw-Sf(w)\|_p^p\\
&\lesssim\|u\|_{\pe,*}^p\\
&\approx \|f\|_{\pe,*}^p.
\end{align*}
This completes the proof.
\end{proof}

\subsection{Littlewood-Paley type characters}\label{s22}

For $\eta\in(0,1]$ and $1\le p<\infty$, let $p'=p/(p-1)$ with
convention $1'=1/(1-1)=\infty$. A function $a$ on $\bt$ is called
$(p',\eta)$-atom provided that there is a subarc $I\subseteq\bt$
such that
$$
\begin{cases}
\mathrm{supp}\,  a\subset I;\\
\int_\bt a(\zeta)|\rd \zeta|=0;\\
\|a\|_{p'}\le |I|^{-\eta/p}.
\end{cases}
$$

Also, let $\mathcal{H}_{p',\eta}:=\mathcal{H}_{p',\eta}(\bt)$ be the
space of all functions
$$
g(\zeta)=\sum_{j=0}^\infty \lambda_j a_j(\zeta)\quad\hbox{in\ the\
sense\ of\ distribution},
$$
where $\lambda_j\in\bc$, $a_j$ is a $(p',\eta)$-atom and
$\sum_{j=0}^\infty |\lambda_j|<\infty$.

The following result is known; see also \cite[Proposition 5]{Z} and
its references including \cite{FS}.

\begin{lem}\label{l22}
\item{\rm(i)} If $0<\eta<1<p<\infty$, then $\cl_{\pe}$ is isomorphic to the dual space
$\mathcal{H}^*_{p',\eta}$ under the pairing
$$
\int_\bt f(\zeta)\overline{g(\zeta)}
\frac{|\rd\zeta|}{2\pi}\quad\forall\quad (f,g)\in
\cl_{\pe}\times\mathcal{H}_{p',\eta}.
$$
\item{\rm(ii)} If $\eta=1\le p<\infty$, then $\cl_{\pe}=BMO$ is isomorphic to the dual space
$\mathcal{H}^*_{\infty,1}$ under the pairing
$$
\int_\bt f(\zeta)\overline{g(\zeta)} \frac{|\rd
\zeta|}{2\pi}\quad\forall\quad (f,g)\in
BMO\times\mathcal{H}_{\infty,1}.
$$
\end{lem}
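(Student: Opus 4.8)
The plan is to prove the stated isomorphisms by the classical atomic--duality technique, after equipping $\mathcal{H}_{p',\eta}$ with its natural atomic norm
\[
\|g\|_{\mathcal{H}_{p',\eta}}=\inf\Big\{\sum_{j\ge 0}|\lambda_j|:\ g=\sum_{j\ge0}\lambda_j a_j,\ a_j\ \hbox{a}\ (p',\eta)\hbox{-atom}\Big\}.
\]
I would treat (i) and (ii) in parallel, since they differ only through the endpoint exponent $p'=\infty$ and the extra reduction $\cl_{p,1}=BMO$ furnished by the John--Nirenberg inequality \cite{JN}. The two halves to establish are: every $f\in\cl_{\pe}$ induces a bounded functional $\Lambda_f(g)=\int_\bt f\,\overline{g}\,\frac{|\rd\zeta|}{2\pi}$ on $\mathcal{H}_{p',\eta}$ with $\|\Lambda_f\|\lesssim\|f\|_{\pe}$, and conversely every $\Lambda\in\mathcal{H}_{p',\eta}^{*}$ is represented by a unique $f\in\cl_{\pe}$ with $\|f\|_{\pe}\lesssim\|\Lambda\|$.

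For the forward bound it suffices to test $\Lambda_f$ on a single $(p',\eta)$-atom $a$ supported on a subarc $I$. Since $\int_\bt a\,|\rd\zeta|=0$, one may subtract the average $f_I$ at no cost and apply H\"older's inequality,
\[
\Big|\int_\bt f\,\overline{a}\,\tfrac{|\rd\zeta|}{2\pi}\Big|
=\Big|\int_I (f-f_I)\overline{a}\,\tfrac{|\rd\zeta|}{2\pi}\Big|
\le\Big(\int_I|f-f_I|^p\tfrac{|\rd\zeta|}{2\pi}\Big)^{1/p}\|a\|_{p'},
\]
and the atom normalization $\|a\|_{p'}\le|I|^{-\eta/p}$ absorbs precisely the factor $|I|^{\eta/p}$ coming out of the Campanato norm, so $|\Lambda_f(a)|\lesssim\|f\|_{\pe}$ uniformly in the atom. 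Summing over an atomic decomposition of $g$ and taking the infimum gives $|\Lambda_f(g)|\lesssim\|f\|_{\pe}\,\|g\|_{\mathcal{H}_{p',\eta}}$; in case (ii) this is the same computation with $p'=\infty$ and $\|f\|_{p,1}\approx\|f\|_{BMO}$.

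For the converse I would use that on the compact circle $\bt$ is itself an admissible arc (with $|\bt|=1$), so the mean--zero subspace $L^{p'}_0(\bt)$ embeds continuously into $\mathcal{H}_{p',\eta}$. Thus $\Lambda$ restricts to a bounded functional on $L^{p'}_0(\bt)$ and, since $1<p<\infty$ in (i) makes this pairing reflexive, the $L^p$--$L^{p'}$ duality represents it by some $f\in L^p(\bt)$, unique modulo constants. To recover the oscillation, fix an arbitrary subarc $I$ and restrict $\Lambda$ to mean--zero $h\in L^{p'}(I)$ extended by zero; each such $h$ is a scalar multiple of a $(p',\eta)$-atom, and combining the representation of $\Lambda$ with $\int_\bt h\,|\rd\zeta|=0$ gives
\[
\Big|\int_I (f-f_I)\overline{h}\,\tfrac{|\rd\zeta|}{2\pi}\Big|=|\Lambda(h)|\le\|\Lambda\|\,|I|^{\eta/p}\|h\|_{p'}.
\]
Because $f-f_I$ has mean zero on $I$, the supremum of the left--hand side over the unit ball of $L^{p'}_0(I)$ equals $\big(\int_I|f-f_I|^p\tfrac{|\rd\zeta|}{2\pi}\big)^{1/p}$; dividing by $|I|^{\eta/p}$ and taking the supremum over $I$ yields $\|f\|_{\pe}\lesssim\|\Lambda\|$.

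The step I expect to be the main obstacle is the endpoint case (ii): when $p'=\infty$ the identity $L^\infty=(L^1)^{*}$ is not the reflexive pairing exploited above, so the representative $f$ cannot be extracted directly from $L^p$--$L^{p'}$ duality. Here I would first invoke the John--Nirenberg inequality to identify $\cl_{p,1}$ with $BMO$ for every $p\ge1$, and then appeal to the Fefferman--Stein duality realizing $BMO$ as the dual of the atomic Hardy space $\mathcal{H}_{\infty,1}=H^1_{\mathrm{at}}$ (cf. \cite{FS} and \cite[Proposition 5]{Z}); the weak-$*$ compactness argument underlying that theorem takes the place of the clean duality step used in (i). Apart from this endpoint subtlety, the argument is the standard atomic duality and rests entirely on the two elementary estimates displayed above.
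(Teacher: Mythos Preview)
Your argument is the standard atomic--duality proof and is essentially correct; the only imprecision is the word ``equals'' in the converse step, since the dual of $L^{p'}_0(I)$ is $L^p(I)/\mathbb{C}$ and the supremum you compute is $\inf_{c}\|f-f_I-c\|_{L^p(I)}$ rather than $\|f-f_I\|_{L^p(I)}$ itself; but the two are comparable within a factor $2$ by the usual triangle--inequality trick, so the conclusion $\|f\|_{\pe}\lesssim\|\Lambda\|$ stands.

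The comparison with the paper is simple: the paper does not prove Lemma~\ref{l22} at all. It introduces the lemma with the sentence ``The following result is known; see also \cite[Proposition~5]{Z} and its references including \cite{FS}'' and gives no argument. Your write-up therefore supplies what the paper merely quotes. The route you take---test on a single atom for the forward bound, then localize the functional to $L^{p'}_0(I)$ to read off the Campanato oscillation, with John--Nirenberg plus Fefferman--Stein handling the endpoint $\eta=1$---is exactly the classical one underlying the references the paper cites, so there is no methodological divergence, only a difference in how much detail is shown.
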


Inspired by \cite{DXY, Tor, OT, FJN} we have the following
equivalent characterization of the Campanato-Morrey spaces, thereby
arriving at the second part of Theorem \ref{t11} (i).

\begin{thm}\label{t22} Let
$$
\begin{cases}
\nabla=\big(\partial_z+\partial_{\bar{z}}, i(\partial_z-\partial_{\bar{z}})\big);\\
1<p<\infty;\\
\eta\in(0,1+p).
\end{cases}
$$

\item{\rm(i)} For an $L^p$ function $f$, $f\in\cl_{\pe}$ if and
only if
\begin{equation*}
\|f\|_{p,\eta,\star}:=\sup_{I\subseteq\bt}\left( \frac1{|I|^\eta}
\int_I \left(\int_{1-|I|}^1 |\nabla Pf(r \zeta)|^2 (1-r^2)\rd
r\right)^\frac{p}{2} \frac{|\rd\zeta|}{2\pi}\right)^\frac1p<\infty.
\end{equation*}

\item{\rm(ii)} For an $H^p$ function
$f$, $f\in \al_{\pe}$ if and only if $\|f\|_{\mathcal{AL}_{p,\eta,\star}}<\infty$.
\end{thm}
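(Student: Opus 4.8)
The plan is to deduce both parts from the Möbius-invariant description in Theorem~\ref{t21} together with classical Littlewood--Paley theory, working throughout with the harmonic function $u=Pf$ and the radial square function $g(F)(\zeta)=\big(\int_0^1|\nabla PF(r\zeta)|^2(1-r)\,\rd r\big)^{1/2}$, for which $\|g(F)\|_p\approx\|F\|_p$ whenever $PF(0)=0$ and $1<p<\infty$. Part (ii) should then follow at once from part (i): for $f\in H^p$ one has $Pf=f$, while analyticity gives $|\nabla f|^2=2|f'|^2$ and $1-r^2\approx 1-r$, so that $\|f\|_{p,\eta,\star}\approx\|f\|_{\mathcal{AL}_{p,\eta,\star}}$, and $\al_{\pe}=\cl_{\pe}\cap H^p$. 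Hence all the substance lies in part (i).

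For the necessity in (i) I would fix a subarc $I\subseteq\bt$, put $I^\ast=2I$, and split $f-f_{I^\ast}=(f-f_{I^\ast})\chi_{I^\ast}+(f-f_{I^\ast})\chi_{\bt\setminus I^\ast}=:f_1+f_2$; since $\nabla P$ annihilates constants, $\nabla Pf=\nabla Pf_1+\nabla Pf_2$. The local term is controlled by dominating the truncated radial integral by the full $g$-function and invoking its $L^p$ boundedness, giving $\int_I g(f_1)(\zeta)^p\,|\rd\zeta|\lesssim\|f_1\|_p^p=\int_{I^\ast}|f-f_{I^\ast}|^p\,|\rd\zeta|\lesssim|I|^\eta\|f\|_{\pe}^p$. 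For the tail term I would estimate $|\nabla Pf_2(r\zeta)|$ pointwise for $\zeta\in I$ and $1-|I|<r<1$ via the kernel bound $|\nabla_z P_z(\xi)|\lesssim|\xi-z|^{-2}$ and a dyadic annular decomposition $\{2^{j+1}I\setminus 2^jI\}$, using the oscillation bounds on $\int_{2^jI}|f-f_{I^\ast}|\,|\rd\zeta|$ already exploited in the proof of Theorem~\ref{t21}; the resulting geometric series converges precisely because $\eta<1+p$ and yields $\int_{1-|I|}^1|\nabla Pf_2(r\zeta)|^2(1-r^2)\,\rd r\lesssim|I|^{2(\eta-1)/p}\|f\|_{\pe}^2$, whose contribution after the power $p/2$, integration over $I$ and division by $|I|^\eta$ is $\lesssim\|f\|_{\pe}^p$.

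For the sufficiency I would target the Möbius norm $\|f\|_{\pe,*}$ of Theorem~\ref{t21}. Fixing $w\in\bd$, set $F_w=f\circ\sw-Pf(w)$; because $\sw$ preserves $\bt$ one has $PF_w=u\circ\sw-u(w)$ and $PF_w(0)=0$, so the Littlewood--Paley equivalence gives $\|F_w\|_p\approx\|g(F_w)\|_p$. Using the conformal identities $|\nabla(u\circ\sw)(z)|=|\nabla u(\sw(z))|\,|\sw'(z)|$ and $1-|\sw(z)|^2=|\sw'(z)|(1-|z|^2)$, the square function $g(F_w)$ transforms into a weighted integral of $|\nabla u|^2$ along the $\sw$-images of radii, carrying the Möbius weight $1-|\sw(z)|^2$. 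I would then cover $\bd$ by the Carleson box over the arc of length $1-|w|$ centered at $w/|w|$ together with its dyadic dilates, on each of which this weight is essentially constant, compare the transported radial integral with the genuine radial star-quantities over these boxes, sum the geometric series, and apply Theorem~\ref{t21} to conclude $\|f\|_{\pe}\lesssim\|f\|_{p,\eta,\star}$. An alternative, in the spirit of \cite{DXY,Tor,OT,FJN}, is to run the dual argument through Lemma~\ref{l22}: polarizing the Littlewood--Paley identity to write $\int_\bt f\bar g\,\frac{|\rd\zeta|}{2\pi}$ (modulo its value at $0$) as $\int_\bd\nabla Pf\cdot\overline{\nabla Pg}\,(1-|z|^2)\,\rd A(z)$ and estimating this by a tent-space Cauchy--Schwarz against each $(p',\eta)$-atom $g$.

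The main obstacle is the sufficiency step: the square function is an $L^2$ object whereas the Campanato norm is genuinely $L^p$ for every $1<p<\infty$, so the argument must accommodate both $p<2$ and $p>2$. Concretely, the delicate points are (a) the passage between the radial $g$-function defining $\|f\|_{p,\eta,\star}$ and the conical/area version that behaves well under the conformal change of variables, which requires a Fubini comparison of the two foliations of $\bd$, and (b) controlling the Möbius weight $1-|\sw(z)|^2$ away from the central box so that the dyadic series converges, once again forcing the constraint $\eta<1+p$. The necessity direction, by contrast, is routine once the $L^p$ boundedness of $g$ and the Poisson-kernel decay are in hand.
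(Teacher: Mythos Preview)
Your necessity argument is the paper's: the same splitting into a constant, a local piece supported on $2I$, and a tail, with the $L^p$-boundedness of the $g$-function handling the local piece and the pointwise Poisson-kernel decay plus dyadic annuli handling the tail. The reduction of (ii) to (i) is likewise identical.

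The sufficiency, however, has a genuine gap. Your primary route goes through Theorem~\ref{t21}, but that theorem is stated only for $0<\eta<2$ (the series $\sum_j 2^{-j}j^p(1+2^{j(\eta-1)})$ in its proof diverges otherwise); since Theorem~\ref{t22} assumes $1<p<\infty$ and $\eta\in(0,1+p)$, the interval $[2,1+p)$ is nonempty and your argument does not apply there. Your alternative route via Lemma~\ref{l22} is restricted to $0<\eta\le 1$, because the atomic predual exists only in the Morrey/BMO regime---there is no $(p',\eta)$-atom duality for $\eta>1$. So neither of your routes covers the Lipschitz range $1<\eta<1+p$. The paper does in fact use your duality alternative as its \emph{main} argument for $0<\eta\le 1$, but for $1<\eta<1+p$ it switches to an entirely different and much simpler device that you omit: from the subharmonicity of $|\nabla Pf|$ and H\"older's inequality on a Carleson box of side $1-|z|$ one extracts the pointwise bound $(1-|z|)^{1+(1-\eta)/p}|\nabla Pf(z)|\lesssim\|f\|_{p,\eta,\star}$, and then invokes the identification $\cl_{\pe}=\mathrm{Lip}_{(\eta-1)/p}$ directly.

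Even restricted to $\eta<2$, your M\"obius approach is genuinely different from the paper's and carries the unresolved obstacle you yourself flag: the $\sw$-images of radii are circular arcs, so relating $g(F_w)$ to the radial star-quantities over dyadic boxes requires a conical/radial square-function comparison together with care about the outer $L^p$ exponent. The paper never passes through Theorem~\ref{t21} for sufficiency and thereby sidesteps all of this.
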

\begin{proof} It suffices to verify (i). We first prove the ``only if" part. If
$f\in\cl_{\pe}$, then any given subarc $I\subseteq\bt$ is used to
decompose $f$ via

$$
f=f_I+(f-f_I)\chi_{2I}+(f-f_I)\chi_{\bt\setminus 2I}=f_1+f_2+f_3,
$$
where $\chi_J$ is the characteristic function of $J\subseteq\bt$,
$2I\subseteq\bt$ is the concentric double of $I$.

In sake of convenience, suppose that $u_j=Pf_j$ is the Poisson
extension of $f_j$. Then, we control $|\nabla u_j|$.

For $u_1$, we have $\nabla u_1=0$ since $u_1$ is a constant.

For $u_2$, we ultilize the Littlewood-Paley estimate for $L^p$ to
derive
\begin{align*}
&\int_I \left(\int_{1-|I|}^1 |\nabla u_2(r \zeta)|^2 (1-r^2)
\rd r\right)^\frac{p}{2}\,|\rd\zeta|\\
&\lesssim\int_{\bt} \left(\int_0^1 |\nabla u_2(r \zeta)|^2
(1-r^2)\rd r\right)^\frac{p}{2}\,|\rd\zeta|\\
&\lesssim\|f_2\|^p_p\\
&\lesssim \|f\|_{p,\eta}^p|I|^\eta.
\end{align*}

For $u_3$, we have that if $z=r e^{i\theta}$ then
\begin{align*}
&|\nabla u_3(z)|\approx\left|\int_\bt
f_3(\zeta)\left(\nabla\frac{1-|z|^2}{|\zeta-z|^2}\right)|\rd\zeta|\right|\\
&\lesssim\int_0^{2\pi}\frac{|f_3(e^{it})|}{(1-r)^2+(\theta-t)^2}
\rd t\\
&\approx\int_{\bt\setminus 2I}\frac{|f(e^{it})-f_I|}{(1-r)^2+(\theta-t)^2}
\rd t\\
&\lesssim\int_{|\theta-t|>|I|}\frac{|f(e^{it})-f_I|}{(\theta-t)^2}
\rd t\\
&\lesssim\sum_{j\ge 0}\int_{2^j|I|<|\theta-t|\le2^{j+1}|I|}
\frac{|f(e^{it})-f_I|}{(2^j|I|)^2}
\rd t\\
&\lesssim\sum_{j\ge 0}\frac{\int_{|\theta-t|\le2^{j+1}|I|}
{|f(e^{it})-f_{2^{j+1}I}|} \rd
t}{(2^j|I|)^2}+\sum_{j\ge 0}\frac{|f_{2^{j+1}I}-f_I|}{2^j|I|}\\
&\lesssim\sum_{j\ge
0}\frac{1}{(2^j|I|)^2}\left(\int_{|\theta-t|\le2^{j+1}|I|}
{|f(e^{it})-f_{2^{j+1}I}|^p} \rd
t\right)^{\frac{1}{p}}\left(2^{j+1}|I|\right)^{1-\frac{1}{p}}\\
&\ +\sum_{j\ge 0}\frac{1}{2^j|I|}
\left(\frac1{|I|}\int_{2^{j+1}I}|f(e^{it})-f_{2^{j+1}I}|^p \rd
t\right)^{\frac{1}{p}}\\
&\lesssim\|f\|_{p,\eta}\sum_{j\ge 0}\left({2^j|I|}\right)^{\frac{\eta-1}{p}-1}\\
&\lesssim\|f\|_{p,\eta}|I|^{\frac{\eta-1}{p}-1}.
\end{align*}
Clearly, the H\"older's inequality has been used in the above
estimation. As a further result, we get

\begin{align*}
& \int_I \left(\int_{1-|I|}^1 |\nabla u_3(r \zeta)|^2 (1-r^2)\rd
r\right)^\frac{p}{2} |\rd\zeta|\\
&\lesssim|I|^{(\frac{\eta-1}{p}-1)p} \|f\|_{p,\eta}^p \int_I
\left(\int_{1-|I|}^1
  (1-r^2)\rd
r\right)^\frac{p}{2} |\rd\zeta|\\
&\lesssim\|f\|_{p,\eta}^p|I|^\eta.
\end{align*}

Putting the previous estimates for $|\nabla u_j|$ together, we
obtain
$$
\|f\|_{p,\eta,\star}\lesssim\|f\|_{p,\eta}<\infty.
$$

Conversely, suppose $\|f\|_{p,\eta,\star}<\infty$. To prove
$f\in\mathcal{L}_{p,\eta}$, we are required to consider the three
cases:
$$
\begin{cases}
0<\eta<1;\\
\eta=1;\\
1<\eta<1+p.
\end{cases}
$$
The first and second cases are based on a dual-related formula
below:

\begin{eqnarray*}
\int_\bt f(\zeta)\overline{g(\zeta)} \frac{|\rd
\zeta|}{2\pi}=Pf(0)\overline{Pg(0)} +\int_\bd \nabla P
f(z)\cdot\overline{\nabla P g(z)}\log\frac1{|z|}\rd A(z).
\end{eqnarray*}
However, the third case follows from a mean value estimate for the
harmonic functions.

{\it Case 1}: $0<\eta<1$. For simplicity, set $Pf=F$ and $Pg=G$. It
suffices to handle the convergence of the pair:
$$
\langle F,G\rangle:=\int_\bd \nabla F(z)\cdot\overline{\nabla
G(z)}(1-|z|^2)\rd A(z)\quad\forall\quad f\in\cl_{\pe}\ \&\
(p',\eta)-\hbox{atom}\ g.
$$
To do so, thanks to Lemma \ref{l22} (i) we may assume that
$f\in\cl_{\pe}$ and $g$ is a $(p',\eta)$-atom generated by a subarc
$I\subseteq\bt$ with $|I|\ll 1$. Then, we estimate

$$
|\langle F,G\rangle|\lesssim\int_\bt\int_0^1 |\nabla
F(r\zeta)|{|\nabla G(r\zeta)|} (1-r)\rd r |\rd\zeta|\approx
\iint_{S(2I)}+\iint_{\mathbb D\setminus S(2I)},
$$
where
$$
\begin{cases}
S(2I)=\{z=r\zeta: r\in [1-2|I|,1)\ \&\ \zeta\in 2I\};\\
\iint_{S(2I)}=\int_{2I}\int_{1-2|I|}^1 |\nabla F(r\zeta)|{|\nabla
G(r\zeta)|}
(1-r)\rd r |\rd\zeta|;\\
\iint_{\mathbb D\setminus S(2I)}=\iint_{\bd\setminus S(2I)} |\nabla
F(r\zeta)|{|\nabla G(r\zeta)|} (1-r)\rd r |\rd\zeta|.
\end{cases}
$$

For $\iint_{S(2I)}$, we use the H\"older's inequality twice and the
$L^{p^\prime}$-bound of the Littlewood-Paley $G$-function to obtain
\begin{align*}
&\iint_{S(2I)}\lesssim\int_{2I}\frac{\left(\int_{1-2|I|}^1 |\nabla
F(r\zeta)|^2 (1-r)\rd r\right)^{\frac{1}{2}}}{\left(\int_{1-2|I|}^1
|\nabla
G(r\zeta)|^2 (1-r)\rd r\right)^{-\frac{1}{2}}}|\rd\zeta|\\
&\lesssim\left(\int_{2I}\left(\int_{1-2|I|}^1 |\nabla F(r\zeta)|^2
(1-r)\rd r\right)^{\frac{p}{2}} |\rd\zeta|\right)^{\frac1{p}}\\
&\ \times\left(\int_{2I}\left(\int_{1-2|I|}^1 |\nabla G(r\zeta)|^2
(1-r)\rd
r\right)^{\frac{p'}{2}}|\rd\zeta|\right)^{\frac1{p'}}\\
&\lesssim\left(\int_{2I}\left(\int_{1-2|I|}^1 |\nabla F(r\zeta)|^2
(1-r)\rd r\right)^{\frac{p}{2}}
|\rd\zeta|\right)^{\frac1{p}}\|g\|_{L^{p^\prime}}\\
&\lesssim\|f\|_{p,\eta,\star} |I|^{\frac{\eta}{p}}\|g\|_{L^{p^\prime}}\\
&\lesssim\|f\|_{p,\eta,\star}.
\end{align*}

For $\iint_{\mathbb D\setminus S(2I)}$, recalling that $g$ is a
$(p',\eta)$-atom associated to $I$ centered at
$\zeta_0=e^{i\theta_0}$, we get
\begin{align*}
G(z)=\int_\bt
g(\zeta)\left(\frac{1-|z|^2}{|\zeta-z|^2}-\frac{1-|z|^2}{|\zeta_0-z|^2}\right)\frac{|\rd
\zeta|}{2\pi},
\end{align*}
whence deriving
\begin{align*}
&\nabla G(z)=\int_\bt
g(\zeta)\nabla\left(\frac{1-|z|^2}{|\zeta-z|^2}-\frac{1-|z|^2}{|\zeta_0-z|^2}\right)\frac{|\rd
\zeta|}{2\pi}\\
&=\int_I
g(\zeta)\nabla\left(\frac{1-|z|^2}{|\zeta-z|^2}-\frac{1-|z|^2}{|\zeta_0-z|^2}\right)\frac{|\rd
\zeta|}{2\pi}.
\end{align*}

To proceed, let
$$
[\zeta_0,\zeta]:=\{\phi(t):=\zeta_0+(\zeta-\zeta_0)t\in\mathbb D:\
t\in [0,1]\}
$$
be the line segment connecting $\zeta\in I$ and $\zeta_0\in I$.
Since $|\zeta|=|\zeta_0|=1$, we can use the mean value theorem for
derivatives to obtain some $t_0\in [0,1]$ such that
\begin{align*}
&\left|\partial_z\left(\frac{1-|z|^2}{|\zeta-z|^2}-
\frac{1-|z|^2}{|\zeta_0-z|^2}
\right)\right|\\
&=\left|\partial_z\Big(\frac{1-|z|^2}{|\zeta-z|^2}\Big)-\partial_z\Big(\frac{1-|z
|^2}{|\zeta_0-z|^2}\Big)\right|\\
&=\left|\partial_t\left(\partial_z\Big(\frac{1-|z|^2}{|\phi(t)-z|^2}\Big)\right)\right|_{t=t_0}
|\zeta-\zeta_0|\\
&=\left|\partial_z\left(\partial_t\Big(\frac{1-|z|^2}{|\phi(t)-z|^2}\Big)\right)\right|_{t=t_0}
|\zeta-\zeta_0|.
\end{align*}
A direct calculation gives
\begin{align*}
  \partial_t\Big(\frac{1-|z|^2}{|\phi(t)-z|^2}\Big)
  =2\Re\phi'(t)\overline{(\phi(t)-z)}
  \frac{1-|z|^2}{|\phi(t)-z|^4}.
\end{align*}
Thus
\begin{align*}
&\left|\partial_z\left(\frac{1-|z|^2}{|\zeta-z|^2}-
\frac{1-|z|^2}{|\zeta_0-z|^2} \right)\right|\\
& =\left|\partial_z\left(2\Re\phi'(t)\overline{(\phi(t)-z)}
  \frac{1-|z|^2}{|\phi(t)-z|^4}\right)\right|_{t=t_0}
|\zeta-\zeta_0|.
\end{align*}
Noticing
$$
\begin{cases}
\phi'(t)=\zeta-\zeta_0;\\
2\Re\phi'(t)\overline{(\phi(t)-z)}=\phi'(t)\overline{(\phi(t)-z)}+
\overline{\phi'(t)}(\phi(t)-z),
\end{cases}
$$
we compute
\begin{align*}
  &\partial_z\left( \phi'(t)(\overline{\phi(t)}-\overline{z})
  \frac{1-|z|^2}{|\phi(t)-z|^4}\right)\\
  &=\frac{\phi'(t)(\overline{\phi(t)}-\overline{z})}{|\phi(t)-z|^6}
  \big(-\overline{z}|\phi(t)-z|^2+2(\overline{\phi(t)}-\overline{z})\big),
\end{align*}
and
\begin{align*}
  &\partial_{z}\left( \overline{\phi'(t)}({\phi(t)}-{z})
  \frac{1-|z|^2}{|\phi(t)-z|^4}\right)\\
  &  =\frac{\overline{\phi'(t)}({\phi(t)}-{z})}{|\phi(t)-z|^6}
  \Big(-\overline{z}|\phi(t)-z|^2+2(\overline{\phi(t)}-\overline{z})
  \Big)-\frac{\overline{\phi'(t)}(1-|z|^2)}{|\phi(t)-z|^4},
\end{align*}
whence getting
\begin{align*}
\left|\partial_z\left(\partial_t\Big(\frac{1-|z|^2}{|\phi(t)-z|^2}\Big)\right)\right|
\lesssim \frac{|\phi'(t)|}{|\phi(t)-z|^4}.
\end{align*}
If $z\in S(2^{j+1}I)\setminus S(2^jI)$ (where $j=1,2,3,...$) and $t\in [0,1]$, then
$$
|\phi(t)-z|\gtrsim 2^j |I|\gtrsim 2^j
|\zeta-\zeta_0|,
$$
and hence
$$
 \left|\partial_z\left(\frac{1-|z|^2}{|\zeta-z|^2}-\frac{1-|z|^2}{|\zeta_0-z|^2}\right)
\right|\lesssim
\frac{|\zeta-\zeta_0|^2}{|\phi(t)-z|^4}\lesssim\frac1{2^{4j}|I|^2} .
$$
Similarly, we have
$$
\left|\partial_{\bar{z}}\left(\frac{1-|z|^2}{|\zeta-z|^2}-
\frac{1-|z|^2}{|\zeta_0-z|^2} \right)\right|
 \lesssim\frac1{2^{4j}|I|^2}.
$$
Hence
\begin{align*}
&\left|\nabla\left(\frac{1-|z|^2}{|\zeta-z|^2}-\frac{1-|z|^2}{|\zeta_0-z|^2}\right)
\right|\\
&\lesssim\left|\partial_z\left(\frac{1-|z|^2}{|\zeta-z|^2}-\frac{1-|z|^2}{|\zeta_0-z|^2}\right)\right|+
\left|\partial_{\bar z}\left(\frac{1-|z|^2}{|\zeta-z|^2}-\frac{1-|z|^2}{|\zeta_0-z|^2}\right)
\right|\\
&\lesssim\frac{1}{2^{4j}|I|^2}.
\end{align*}
Therefore, we employ $\|g\|_{p'}\lesssim|I|^{-\frac{\eta}{p}}$ and
H\"older's inequality to obtain
$$
\left|\nabla G(z)\right|
\lesssim\frac{\int_I|g(
\zeta)||\rd\zeta|}{2^{4j}|I|^2}
\lesssim\frac{|I|^{\frac{1-\eta}{p}-2}}{2^{4j}}\quad\forall\quad
z\in S(2^{j+1}I)\setminus S(2^jI).
$$

Upon writing
$$
\iint_{\mathbb D\setminus S(2I)}=\sum_{j\ge
1}\int_{2^jI}\int_{1-2^{j+1}|I|}^{1-2^{j}|I|}+\sum_{j\ge
1}\int_{2^{j+1}\setminus 2^jI}\int_{1-2^{j+1}|I|}^1,
$$
we have two types of estimates as follows. The first type is:
\begin{align*}
&\sum_{j\ge 1}\int_{2^jI}\int_{1-2^{j+1}|I|}^{1-2^{j}|I|}\\
&\lesssim\sum_{j\ge 1}\int_{2^{j}I}\bigg(
\big(\int_{1-2^{j+1}|I|}^{1-2^j
|I|}|\nabla F(r\zeta)|^2(1-r)\rd r\big)^{\frac1{2}} \\
&\ \times \big(\int_{1-2^{j+1}|I|}^{1-2^j |I|}|\nabla
G(r\zeta)|^2(1-r)\rd r\big)^{\frac1{2}}\bigg) |\rd\zeta|\\
&\lesssim\sum_{j\ge 1}\left(\int_{2^{j}I}\bigg(
\int_{1-2^{j+1}|I|}^{1-2^j
|I|}|\nabla F(r\zeta)|^2(1-r)\rd r\bigg)^{\frac{p}{2}}|\rd\zeta|\right)^{\frac1{p}} \\
&\ \times \left(\int_{2^{j}I}\bigg( \int_{1-2^{j+1}|I|}^{1-2^j
|I|}|\nabla G(r\zeta)|^2(1-r)\rd
r\bigg)^{\frac{p'}{2}}\,|\rd\zeta|\right)^{\frac1{p'}}\\
&\lesssim\|f\|_{p,\eta,\star}\sum_{j\ge 1}|2^{j}I|^{\eta/p}
\left(\int_{2^{j}I}\bigg( \int_{1-2^{j+1}|I|}^{1-2^j |I|}
(\frac{|I|^{\frac{1-\eta}{p}-2}}{2^{4j}})^2 (1-r)\rd
r\bigg)^{\frac{p'}{2}}\,|\rd\zeta|\right)^{\frac1{p'}} \\
&\lesssim
\|f\|_{p,\eta,\star}|I|^{\frac{\eta}{p}+(\frac{1-\eta}{p}-2)}
\sum_{j\ge 1}2^{(j+1)\frac{\eta}{p}-4j} \left(\int_{2^{j}I}\bigg(
\int_{1-2^{j+1}|I|}^{1}
(1-r)\rd r\bigg)^{\frac{p'}{2}}\,|\rd\zeta|\right)^{\frac1{p'}} \\
&\lesssim\|f\|_{p,\eta,\star}|I|^{\frac{1}{p}-2} \sum_{j\ge 1}
2^{(j+1)\frac{\eta}{p}-4j} \left(2^{j+1}|I|(2^{j+1}|I|)^{p'}\right)^{\frac1{p'}} \\
&\lesssim\|f\|_{p,\eta,\star}|I|^{\frac{1}{p}-2+1+\frac1{p'}}\sum_{j\ge
1}
2^{(j+1)\frac{\eta}{p}-4j+j+1+\frac{j+1}{p'}}\\
&\lesssim\|f\|_{p,\eta,\star}\sum_{j\ge 1} 2^{j(\frac{\eta-1}{p}-2)}\\
&\lesssim\|f\|_{p,\eta,\star}.
\end{align*}
In the above we have used the H\"older's inequality twice, the
estimate for $|\nabla G|$ on $S(2^{j+1}I)\setminus S(2^jI)$ and the
assumption $0<\eta<1$.

The second type is:
$$
\sum_{j\ge 1}\int_{2^{j+1}\setminus
2^jI}\int_{1-2^{j+1}|I|}^{1}\lesssim\|f\|_{p,\eta,\star}\sum_{j\ge
1} 2^{j(\frac{\eta-1}{p}-2)},
$$
which may be verfied in a way similar to the first type. As a
result, we have
$$
\iint_{\mathbb D\setminus S(2I)}\lesssim \|f\|_{p,\eta,\star},
$$
whence reaching
$$
|\langle F,G\rangle|\lesssim\|f\|_{p,\eta,\star}.
$$
This in turn implies $f\in \mathcal{L}_{p,\eta}$.

{\it Case 2}: $\eta=1$. Regarding this situation, we apply
$(\infty,1)$-atoms (due to Lemma \ref{l22} (ii)) to the previous
discussion, and then obtain
$$
|\langle F,G\rangle|\lesssim\|f\|_{p,1,\star},
$$
which yields $f\in BMO$.

{\it Case 3}: $1<\eta< 1+p$. Suppose $\|f\|_{p,\eta,\star}<\infty$
and set $u=Pf$. For a given $z=r e^{i\theta}\in\bd$, let $S(I)$ be
the Carleson box with $|I|=(1-|z|)$ and $e^{i\theta}$ be the center
of $I$. Then

\begin{align*}|I|^\eta\|f\|^p_{p,\eta,\star}&\gtrsim\int_I\Big(\int_{1-|I|}^1|\nabla u(r
e^{i\theta})|^2(1-r)\rd r\Big)^{\frac{p}{2}}\rd \theta.
\end{align*}
Since $u$ is harmonic in $\mathbb D$, $\nabla u$ is harmonic
overthere as well. Consequently, there exists some number $\rho>0$
such that $B(z,\frac{1-|z|}{\rho})\subset S(I)$, where
$B(z,\frac{1-|z|}{\rho})$ is the open disk with center $z$ and
radius $\frac{1-|z|}{\rho}$. This implies the following mean value
inequality:
\begin{align*}
(1-|z|)|\nabla u(z)| \lesssim
{(1-|z|)^{-2}}\int_{B(z,\frac{1-|z|}{\rho})} |\nabla u(w)|(1-|w|)
\rd A(w).
\end{align*}
So, it follows from the H\"older's inequality that
\begin{align*}
&(1-|z|)|\nabla u(z)|\\
&\lesssim {(1-|z|)^{-2}}\int_{S(I)} |\nabla u(w)|(1-|w|) \rd
A(w)\\
&\lesssim (1-|z|)^{-2}\int_I\left(\int_{1-|I|}^1 |\nabla u(r
e^{i\theta})|^2(1-|r|) \rd r\right)^{\frac1{2}}
\left(\int_{1-|I|}^1(1-r)\rd r\right)^{\frac1{2}}\rd\theta\\
&\lesssim {(1-|z|)^{-1}}\int_I\left(\int_{1-|I|}^1 |\nabla u(r
e^{i\theta})|^2(1-|r|) \rd r\right)^{\frac1{2}} \rd\theta\\
&\lesssim \frac{1}{|I|}\left(\int_I\big(\int_{1-|I|}^1 |\nabla u(r
e^{i\theta})|^2(1-|r|) \rd r\big)^{\frac{p}{2}}
\rd\theta\right)^{\frac1{p}} |I|^{1-\frac1{p}}\\
&\lesssim |I|^\frac{\eta-1}{p}\|f\|_{p,\eta,\star}\\
&\lesssim (1-|z|)^\frac{\eta-1}{p}\|f\|_{p,\eta,\star}.
\end{align*}
This means that
$$
(1-|z|)^{\frac{1-\eta}{p}+1}|\nabla
u(z)|\lesssim\|f\|_{p,\eta,\star}
$$
and thus
$$
f\in\mathrm{Lip}_{\frac{\eta-1}{p}}=\cl_{\pe}\quad\hbox{under}\quad
1<\eta< 1+p.
$$
\end{proof}

\section{Analytic Campanato spaces in Bloch type spaces}\label{s3}
\setcounter{equation}{0}

\subsection{Generalized Carleson measures}\label{s31} For $\eta>0$, we write $\mathcal{CM}_\eta$ for the class of all
$\eta$-Carleson measures on $\mathbb D$. Recall that a nonnegative Borel measure
$\mu$ on $\bd$ is called an $\eta$-Carleson measure provided
$$
\|\mu
\|_{\mathcal{CM}_\eta}=\sup_{I\subseteq\bt}\frac{\mu(S(I))}{|I|^\eta}
<\infty
$$
where
$$
S(I)=\{z=re^{i\theta}\in\mathbb D:\ 1-|I|\le r<1\ \ \&\ \
e^{i\theta}\in I\}
$$
is the Carleson square based on a subarc $I\subseteq\bt$.

For $a,b>0$ we define an integral operator $T_{a,b}$ as
\begin{equation*}
T_{a,b}f(z)=\int_\bd\frac{(1-|w|^2)^{b-1}}{|1-\bar w z|^{a+b}
}f(w)\rd A(w)\quad\forall\quad z\in\bd.
\end{equation*}

Below is a re-statement of Lemma 3.1.2 in \cite{X1}.

\begin{lem}\label{l31} Let $\eta\in(0,2)$, $a>\frac{2-\eta}{2}$,
$b>\frac{1+\eta}{2}$ and $f$ be Lebesgue measurable on $\bd$. If
$|f(z)|^2(1-|z|^2)^{\eta}\rd A(z)$ belongs to $\cm_\eta$, then
$|T_{a,b}f(z)|^2(1-|z|^2)^{\eta+2a-2}\rd A(z)$ also belongs to
$\cm_\eta$.
\end{lem}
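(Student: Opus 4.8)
The plan is to unwind the Carleson norm and reduce everything to a single scale. Set $\rd\mu(w)=|f(w)|^2(1-|w|^2)^\eta\,\rd A(w)$, so the hypothesis reads $\mu\in\cm_\eta$, i.e. $\mu(S(J))\lesssim\|\mu\|_{\cm_\eta}|J|^\eta$ for every subarc $J\subseteq\bt$; the goal is to prove
\[
\int_{S(I)}|T_{a,b}f(z)|^2(1-|z|^2)^{\eta+2a-2}\,\rd A(z)\lesssim\|\mu\|_{\cm_\eta}\,|I|^\eta
\]
uniformly in $I$. First I would bound $T_{a,b}f(z)$ pointwise by applying the Cauchy--Schwarz inequality to the integral defining it, splitting the weight $(1-|w|^2)^{b-1}$ and the kernel $|1-\bar w z|^{-(a+b)}$ into two matching halves. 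Writing $c:=2a+2b-2\beta$ and $e:=2b-2-2\alpha-\eta$, I would choose the split so that the ``size'' factor is the Forelli--Rudin integral (see, e.g., \cite{Zhu})
\[
\int_\bd\frac{(1-|w|^2)^{2\alpha}}{|1-\bar w z|^{2\beta}}\,\rd A(w)\approx(1-|z|^2)^{2\alpha+2-2\beta},
\]
leaving $\int_\bd(1-|w|^2)^{e}|1-\bar w z|^{-c}\,\rd\mu(w)$ as the other factor. The hypothesis $b>\frac{1+\eta}{2}$ is exactly what leaves room to pick $\alpha$ with both $2\alpha>-1$ (so the displayed estimate is admissible) and $e>0$.

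Feeding this back and using the identity $(2\alpha+2-2\beta)+(\eta+2a-2)=c-2-e$, the quantity to be integrated over $S(I)$ is dominated by $(1-|z|^2)^{c-2-e}|1-\bar w z|^{-c}$ tested against $\rd\mu(w)$. An application of Fubini then recasts the left-hand side as
\[
\int_\bd(1-|w|^2)^{e}\,J(w)\,\rd\mu(w),\qquad J(w):=\int_{S(I)}\frac{(1-|z|^2)^{c-2-e}}{|1-\bar w z|^{c}}\,\rd A(z),
\]
so the task is reduced to estimating the truncated kernel integral $J(w)$ and summing $(1-|w|^2)^eJ(w)$ against $\mu$.

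To handle $J(w)$ I would split $\bd$ relative to $I$ into the box $S(2I)$ and the dyadic shells $S(2^{k+1}I)\setminus S(2^kI)$, $k\ge1$. On $S(2I)$ the full-disk Forelli--Rudin estimate (valid since $e>0$) gives $J(w)\lesssim(1-|w|^2)^{-e}$, hence $(1-|w|^2)^eJ(w)\lesssim1$, and this region contributes $\lesssim\mu(S(2I))\lesssim|I|^\eta$. On the $k$-th shell one has $|1-\bar w z|\gtrsim2^k|I|$ for all $z\in S(I)$, so that $J(w)\lesssim(2^k|I|)^{-c}\int_{S(I)}(1-|z|^2)^{c-2-e}\,\rd A(z)\approx2^{-kc}|I|^{-e}$, while $(1-|w|^2)^e\lesssim(2^k|I|)^e$; together with $\mu(S(2^{k+1}I))\lesssim(2^k|I|)^\eta$ the $k$-th shell contributes $\lesssim2^{k(e+\eta-c)}|I|^\eta$, and the geometric series sums to $\lesssim|I|^\eta$ once $c>e+\eta$.

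The main obstacle, and the one place the two hypotheses genuinely interact, is to verify that $\alpha$ and $\beta$ (equivalently $e$ and $c$) can be chosen simultaneously inside the windows imposed by the Forelli--Rudin admissibility ($2\alpha>-1$, $2\beta>2\alpha+2$), by the convergence of the inner $z$-integral in $J$ ($c-2-e>-1$), and by the summability $c>e+\eta$ of the dyadic series. Here $b>\frac{1+\eta}{2}$ supplies the room for $2\alpha>-1$ together with $e>0$, and $a>\frac{2-\eta}{2}$ supplies the room to raise $c$ above $e+\eta$ while keeping $c-2-e>-1$. That the latter threshold is the natural one is confirmed by the extremal choice $f(w)=(1-|w|^2)^{-\eta/2}$: then $\rd\mu=\rd A\in\cm_\eta$ because $\eta<2$, while the Forelli--Rudin estimate yields $T_{a,b}f(z)\approx(1-|z|^2)^{1-\eta/2-a}$ precisely when $a>\frac{2-\eta}{2}$, making $|T_{a,b}f(z)|^2(1-|z|^2)^{\eta+2a-2}\approx1$. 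I expect this parameter bookkeeping, rather than any individual estimate, to be the delicate point.
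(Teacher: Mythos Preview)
The paper does not prove this lemma; it is introduced as ``a re-statement of Lemma~3.1.2 in \cite{X1}'' and no argument is given. Your scheme---Cauchy--Schwarz with a Forelli--Rudin split, Fubini, and a dyadic shell decomposition of $\bd$ relative to $S(I)$---is the standard route for such operator-on-Carleson-measure estimates and is essentially what one finds in that reference, so there is nothing to compare against here beyond confirming that your outline is sound.

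The parameter bookkeeping you flag as the crux does resolve. Unwinding your constraints, one needs
\[
-\tfrac12<\alpha<b-1-\tfrac{\eta}{2}
\qquad\text{and}\qquad
\alpha+1<\beta<\alpha+a+\min\Bigl(1,\tfrac{1+\eta}{2}\Bigr),
\]
the first pair coming from $2\alpha>-1$ and $e>0$, the second from the Forelli--Rudin admissibility $2\beta>2\alpha+2$ together with $c-2-e>-1$ and $c>e+\eta$. The hypothesis $b>\tfrac{1+\eta}{2}$ makes the first interval nonempty, and $a>\max\bigl(0,\tfrac{1-\eta}{2}\bigr)$---which is implied by $a>\tfrac{2-\eta}{2}$---makes the second nonempty, so admissible $\alpha,\beta$ exist and the argument goes through. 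Two small points to keep straight when you write it up: (a) the shells $S(2^{k+1}I)\setminus S(2^kI)$ exhaust $\bd$ only up to $k\approx\log_2(1/|I|)$, and the remaining ``top'' region is handled by the same kernel bound together with $\mu(\bd)<\infty$, the required estimate there being $|I|^{c-e}\lesssim|I|^\eta$, i.e.\ $c>e+\eta$ again; (b) your sanity-check example $f(w)=(1-|w|^2)^{-\eta/2}$ confirms consistency of the conclusion rather than sharpness of the threshold on $a$, since for $a\le\tfrac{2-\eta}{2}$ the Forelli--Rudin integral for $T_{a,b}f$ is merely bounded.
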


\subsection{Distance estimates}\label{s32}

Importantly, Theorem \ref{t22} (ii) and the mean value inequality
for the subharmonic functions (see also the treatment for Case 3:
$1<\eta< 1+p$ in the proof of Theorem \ref{t22}) are employed to
derive the following optimal inclusion:
$$
0<\eta\le 1+p\ \ \&\ \ 1\le
p<\infty\Longrightarrow\mathcal{AL}_{p,\eta}\subseteq\mathcal{B}_{(p+1-\eta)/p}.
$$
This embedding, along with the P. Jone's distance estimation from the Bloch
functions to $BMOA$ (cf. \cite{GZ, Zh, X1}), suggests us to consider
the distance of an   $\mathcal{B}_{(p+1-\eta)/{p}}$ function $f$
to $\mathcal{AL}_{p,\eta}$. Such a distance is defined by

\begin{equation*}
\mathrm{dist}_{\cb_{(p+1-\eta)/p}}(f,\al_{p,\eta})
=\inf_{g\in\al_{p,\eta}}\|f-g\|_{\cb_{(p+1-\eta)/{p}}}\quad\forall\quad
f\in \mathcal{B}_{(p+1-\eta)/{p}}.
\end{equation*}

Motivated by Theorem 3.1.3 in \cite{X1}, we get the following
characterization of the above-defined distance, approaching the second
part of Theorem \ref{t11} (ii).

\begin{thm}\label{t31} For $\e >0$, $\eta\in(0,2)$ and $f\in\cb_{{(3-\eta)}/{2}}$, let
$$
\Omega_\e (f)=\{z\in\bd: (1-|z|^2)^{{(3-\eta)}/{2}}|f'(z)|\ge\e\}.
$$
Then
\begin{equation*}
\mathrm{dist}_{\cb_{{(3-\eta)}/{2}}}(f,\al_{2,\eta}) =\inf\Big\{\e>0:
\frac{\chi_{\Omega_\e(f)}(z)}{(1-|z|^2)^{2-\eta}}\rd A(z)\ \hbox{is\ in}\ \cm_\eta\Big\}.
\end{equation*}
\end{thm}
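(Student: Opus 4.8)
The plan is to prove the asserted identity as the conjunction of two inequalities between $D:=\db(f,\al_{2,\eta})$ and $E:=\inf\{\e>0:\ \chi_{\Omega_\e(f)}(z)(1-|z|^2)^{\eta-2}\rd A(z)\in\cm_\eta\}$. The single fact I would use repeatedly is the $p=2$ reading of Theorem \ref{t22} (ii): an analytic $g$ lies in $\al_{2,\eta}$ exactly when $|g'(z)|^2(1-|z|^2)\rd A(z)\in\cm_\eta$, since in polar coordinates the Littlewood--Paley integral defining $\|g\|_{\mathcal{AL}_{2,\eta,\star}}$ over a Carleson box $S(I)$ is comparable to the mass of this measure on $S(I)$ divided by $|I|^\eta$.

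First I would dispatch $E\le D$. Fix $\delta>D$ and choose $g\in\al_{2,\eta}$ with $\|f-g\|_{\cb_{(3-\eta)/2}}<\delta$. For any $\e>\delta$ and $z\in\Omega_\e(f)$ the triangle inequality gives
\[
(1-|z|^2)^{(3-\eta)/2}|g'(z)|\ \ge\ (1-|z|^2)^{(3-\eta)/2}|f'(z)|-\delta\ >\ \e-\delta,
\]
so that $\chi_{\Omega_\e(f)}(z)\le(\e-\delta)^{-2}(1-|z|^2)^{3-\eta}|g'(z)|^2$ and hence
\[
\frac{\chi_{\Omega_\e(f)}(z)}{(1-|z|^2)^{2-\eta}}\rd A(z)\ \le\ (\e-\delta)^{-2}\,|g'(z)|^2(1-|z|^2)\rd A(z).
\]
The right-hand measure is an $\eta$-Carleson measure because $g\in\al_{2,\eta}$, and a measure dominated by a Carleson measure is itself Carleson; thus $E\le\e$ for every $\e>\delta$, whence $E\le\delta$ and, $\delta>D$ being arbitrary, $E\le D$.

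For the reverse inequality $D\lesssim E$ I would argue constructively. Given $\e>E$, so that $\nu_\e:=\chi_{\Omega_\e(f)}(1-|z|^2)^{\eta-2}\rd A\in\cm_\eta$, start from the weighted Bergman reproducing formula $f'(z)=c_s\int_\bd (1-|w|^2)^s(1-\bar w z)^{-(2+s)}f'(w)\rd A(w)$, valid for $s$ large since $f\in\cb_{(3-\eta)/2}$ forces $|f'(w)|\lesssim(1-|w|^2)^{-(3-\eta)/2}$. I then define the competitor $g$ by integrating the same kernel only over the bad set,
\[
g'(z)=c_s\int_{\Omega_\e(f)}\frac{(1-|w|^2)^s}{(1-\bar w z)^{2+s}}\,f'(w)\rd A(w),\qquad g(0)=f(0),
\]
and check two things. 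For $\|f-g\|_{\cb_{(3-\eta)/2}}\lesssim\e$, note that $(f-g)'$ is the same integral over $\bd\setminus\Omega_\e(f)$, where $|f'(w)|<\e(1-|w|^2)^{-(3-\eta)/2}$; a Forelli--Rudin estimate $\int_\bd (1-|w|^2)^{s-(3-\eta)/2}|1-\bar w z|^{-(2+s)}\rd A(w)\approx(1-|z|^2)^{-(3-\eta)/2}$ then yields $(1-|z|^2)^{(3-\eta)/2}|f'(z)-g'(z)|\lesssim\e$. For $g\in\al_{2,\eta}$, I would set $F(w)=|f'(w)|\chi_{\Omega_\e(f)}(w)(1-|w|^2)^{(1-\eta)/2}$ and write $|g'(z)|\lesssim T_{a,b}F(z)$ with $a=(3-\eta)/2$ and $b=s+(1+\eta)/2$; the exponent $(1-\eta)/2$ is forced precisely so that $|F(w)|^2(1-|w|^2)^\eta\le\|f\|_{\cb_{(3-\eta)/2}}^2\,\nu_\e\in\cm_\eta$.

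The crux is this last step, and I expect it to be the main obstacle: applying Lemma \ref{l31} (whose hypotheses $a>(2-\eta)/2$ and $b>(1+\eta)/2$ hold here for any $s>0$) turns $|F|^2(1-|z|^2)^\eta\rd A\in\cm_\eta$ into $|T_{a,b}F|^2(1-|z|^2)^{\eta+2a-2}\rd A\in\cm_\eta$, and with $a=(3-\eta)/2$ the weight exponent collapses to $\eta+2a-2=1$, giving $|g'(z)|^2(1-|z|^2)\rd A\in\cm_\eta$, i.e. $g\in\al_{2,\eta}$. The delicate bookkeeping is choosing the shift $(1-\eta)/2$ and the parameter $s$ so that simultaneously the reproducing formula converges, the Forelli--Rudin integral produces the sharp weight $(3-\eta)/2$, the substitution identifies $|g'|$ with an admissible $T_{a,b}F$, and Lemma \ref{l31}'s output weight is exactly the Campanato weight $1$. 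Once $g\in\al_{2,\eta}$ and $\|f-g\|_{\cb_{(3-\eta)/2}}\lesssim\e$ are in hand, $D\le\|f-g\|_{\cb_{(3-\eta)/2}}\lesssim\e$ for all $\e>E$ gives $D\lesssim E$; combined with $E\le D$ this delivers the stated identity.
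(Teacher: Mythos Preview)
Your argument is correct and follows essentially the paper's approach: the inequality $E\le D$ is the same estimate (you write it directly, the paper phrases it as a contradiction), and for $D\lesssim E$ both split a reproducing formula over $\Omega_\e(f)$ and its complement, bound the complement piece by a Forelli--Rudin integral, and push the $\Omega_\e(f)$ piece into $\al_{2,\eta}$ via Lemma \ref{l31} with $a=(3-\eta)/2$. The only cosmetic differences are your use of the weighted Bergman reproducing formula for $f'$ with a free parameter $s$ (the paper uses a representation for $f$ that, after differentiation, corresponds to $s=(3-\eta)/2$) and your choice $b=s+(1+\eta)/2$ in Lemma \ref{l31} versus the paper's $b=2$.
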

\begin{proof} For $f\in \cb_{{(3-\eta)}/{2}}$ and $z\in\mathbb D$, one has
the following representation formula (see, for example, \cite[(1.1)]{ABP}
or \cite[p. 55]{X1}):
$$
f(z)=f(0)+C\int_\bd\frac{(1-|w|^2)^{{(3-\eta)}/{2}}f'(w)}{\bar w
(1-\bar w z)^{1+{(3-\eta)}/{2}}}\rd A(w)=:f_1(z)+f_2(z),
$$
where
$$
\begin{cases}
f_1(z)=f(0)+C\int_{\Omega_\e(f)}\frac{(1-|w|^2)^{{(3-\eta)}/{2}}f'(w)}{\bar
w (1-\bar w z)^{1+{(3-\eta)}/{2}}}\rd A(w);\\
f_2(z)=C\int_{\bd\setminus\Omega_\e(f)}\frac{(1-|w|^2)^{{(3-\eta)}/{2}}f'(w)}{\bar
w (1-\bar w z)^{1+{(3-\eta)}/{2}}}\rd A(w);
\end{cases}
$$
with $C$ being a constant depending only on $\eta$. Then
\begin{align*}
&|f'_1(z)|\lesssim \|f\|_{\cb_{(3-\eta)/2}}
\int_{\bd}\frac{\chi_{\Omega_\e(f)}(w)}{ |1-\bar w
z|^{2+{(3-\eta)}/{2}}}\rd A(w)\\
&\approx\|f\|_{\cb_{(3-\eta)/2}} \int_{\bd}\frac{(1-|w|^2)}{ |1-\bar
w z|^{2+{(3-\eta)}/{2}}}\left(\frac{\chi_{\Omega_\e(f)}(w)}{1-|w|^2}\right)\rd
A(w)
\end{align*}
So, if
$$
\big(\chi_{\Omega_\e(f)}(z)\big)^2(1-|z|^2)^{\eta-2} \rd A(z)
$$
is in $\cm_\eta$, Lemma \ref{l31} (with $a=(3-\eta)/2$ and $b=2$) implies that
$$
|f'_1(z)|^2(1-|z|^2)\rd A(z)
$$
belongs to $\cm_{\eta}$. This means $f_1\in\al_{2,\eta}$. Meanwhile,
we have
$$
|f'_2(z)|\lesssim \e\int_{\bd}{|1-\bar w
z|^{{(\eta-3)}/{2}-2}}\rd
A(w)\lesssim\frac{\e}{(1-|w|^2)^{(3-\eta)/2}}.
$$
This gives
\begin{equation*}
\mathrm{dist}_{\cb_{{(3-\eta)}/{2}}}(f,\al_{2,\eta})
\le\inf\{\e>0: \chi_{\Omega_\e(f)}(z)(1-|z|^2)^{\eta-2} \rd A(z)
\in\cm_\eta\}.
\end{equation*}

In order to prove that the last inequality is actually an equality,
we may assume that $\e_0$ equals the right-hand quantity of the last
inequality and
\begin{align*}
\mathrm{dist}_{\cb_{{(3-\eta)}/{2}}}(f,\al_{2,\eta})<\e_0.
\end{align*}
It is enough to consider the case $\e_0>0$. Under $\e_0>0$ there exists an $\e_1$ such
that
$$
0<\e_1<\e_0\ \ \&\ \ \db(f,\al_{2,\eta})<\e_1.
$$
Hence, by definition, we can find a function $g\in\al_{2,\eta}$ such
that
$$
\|f-g\|_{\cb_{(3-\eta)/2}}<\e_1.
$$
Now for any $\e\in(\e_1,\e_0)$ we have that
$$
\chi_{\Omega_\e(f)}(z)(1-|z|^2)^{\eta-2} \rd A(z)
$$
is not in $\cm_\eta$. But, $\|f-g\|_{\cb_{(3-\eta)/2}}<\e_1$ yields
$$
(1-|z|^2)^{(3-\eta)/2}|g'(z)|>(1-|z|^2)^{(3-\eta)/2}|f'(z)|-\e_1
\quad\forall z\in\bd,
$$
and so
\begin{align*}
\chi_{\Omega_\e(f)}(z)\le \chi_{\Omega_{\e-\e_1}(g)}(z)\quad
\forall\quad z\in\bd.
\end{align*}
 This implies that
\begin{align*}
\chi_{\Omega_{\e-\e_1}(g)}(z)(1-|z|^2)^{\eta-2} \rd A(z)
\end{align*}
does not belong to $\cm_\eta$. However,
\begin{align*}
&\chi_{\Omega_{\e-\e_1}(g)}(z)(1-|z|^2)^{\eta-2} \rd A(z)\\
&\le\chi_{\Omega_{\e-\e_1}(g)}(z)\frac{(1-|z|^2)}{(1-|z|^2)^{3-\eta}}
\rd A(z)\\
&\le\frac{|g'(z)|^2}{(\e-\e_1)^2}(1-|z|^2)\chi_{\Omega_{\e-\e_1}(g)}(z)\rd
A(z)\\
&\le{(\e-\e_1)^{-2}}|g'(z)|^2(1-|z|^2)\rd A(z).
\end{align*}
Since $g\in\al_{2,\eta}$ means that
$$
|g'(z)|^2(1-|z|^2)\rd A(z)
$$
is in $\cm_\eta$, and consequently
$$
\chi_{\Omega_{\e-\e_1}(g)}(z)(1-|z|^2)^{\eta-2} \rd A(z)
$$
is in $\cm_\eta$. Now, a contradiction occurs. Thus we must have
$$
\db(f,\al_{2,\eta})=\e_0
$$
as required.
\end{proof}

\begin{rem}\label{r31}
\item{\rm(i)} Theorem \ref{t31} characterizes the closure of $\al_{2,\eta}$ in the
$\cb_{{(3-\eta)}/{2}}$ norm. That is, for
$f\in\cb_{{(3-\eta)}/{2}}$, $f$ is in the closure of $\al_{2,\eta}$
in the $\cb_{{(3-\eta)}/{2}}$ norm if and only if for every $\e>0$,
\begin{align*}
\int_{\Omega_\e (f)\cap S(I)}(1-|z|^2)^{\eta-2}\rd A(z)\lesssim
|I|^\eta
\end{align*}
holds for any Carleson square $S(I)\subseteq\mathbb D$.

\item{\rm(ii)} The proof of Theorem \ref{t31} depends on an important fact that
$f\in\al_{2,\eta}$ if and only if $|f'(z)|^2(1-|z|^2)\rd
A(z)$ is a member of $\cm_\eta$. Given $1<p<\infty$, if we define the analytic
function space $\ca_{\pe}$ to be the space of all $H^p$ functions
satisfying
\begin{align*}
\|f\|_{\ca_{\pe}}^p=\big\| |f'(z)|^p (1-|z|^2 )^{p-1}\rd
A(z)\big\|_{\cm_\eta}<\infty,
\end{align*}
then $\ca_{2,\eta}=\al_{2,\eta}$. This $\ca_{\pe}$ is closely
related to $\al_{\pe}$. It can also be proved that
$\ca_{\pe}\subseteq\cb_{1+(1-\eta)/p}$. In a future article, we
will characterize the distance of a function in
$\cb_{1+(1-\eta)/p}$ to $\ca_{\pe}$ under $0<\eta\le 1+p$ and $1\le
p<\infty$.
\end{rem}

\section{Compositions for analytic Campanato spaces}\label{s4}
\setcounter{equation}{0}

\subsection{Actions between analytic Campanato spaces}\label{s41}

In what follows, for $0<r<1$ and an analytic self-map $\p $ of
$\mathbb D$ let
$$
N_r(\p
,w)=\sum_{z\in\p^{-1}\{w\}}\log^+(\frac{r}{|z|})\quad\forall\quad
w\in\mathbb D,
$$
where $\log^+x=\max\{\log x, 0\}$, and then set
$$
N(\p,w)=\lim_{r\to 1}N_r(\p,w)\quad\forall\quad w\in\mathbb D
$$
be the Nevanlinna counting function of $\phi$. The importance of such a counting function in the study of compositions on $H^p$ (cf. \cite{CM, JMCP, JS, SM})  initially comes from the following fundamental formula (cf. \cite[p. 2336]{S}).

\begin{lem}\label{l41} Let $\p $ be an analytic self-map of $\mathbb D$ and $f\in H^p$ with $p\in [1,\infty)$. Then
\begin{equation*}
\|f\circ\p\|^p_{p}=|f(\p(0))|^p+\frac{p^2}{2}\int_\bd |f(w)|^{p-2}
|f'(w)|^2 N(\p,w)\,\rd A(w).
\end{equation*}
In particular,
\begin{equation*}
\|f\|^p_{p}=|f(0)|^p+\frac{p^2}{2}\int_\bd |f(w)|^{p-2} |f'(w)|^2
\log\frac1{|w|}\,\rd A(w).
\end{equation*}
\end{lem}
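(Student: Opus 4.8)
The plan is to reduce the general identity to its special case (where $\p$ is the identity) and then to pass from the special case to the general one by a non-univalent change of variables governed by the counting function $N(\p,\cdot)$. The real analytic work lives in the special case; the passage to the general case is a substitution identity.

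First I would establish the second (special) identity, which is the Littlewood--Paley formula, via Green's theorem applied to $u=|f|^p$. For analytic $f$ one computes, away from the zeros of $f$, that $\Delta|f|^p=4\partial_z\partial_{\bar z}|f|^p=p^2|f|^{p-2}|f'|^2$, and the Green representation of a $C^2$ function at the origin reads $\int_\bt u\,\frac{|\rd\zeta|}{2\pi}=u(0)+\frac12\int_\bd\Delta u(w)\log\frac1{|w|}\,\rd A(w)$ (the factor $\tfrac12$ is fixed by testing against $u=|z|^2$). Substituting $u=|f|^p$, so that $u(0)=|f(0)|^p$ and $\int_\bt u\,\frac{|\rd\zeta|}{2\pi}=\|f\|_p^p$, yields exactly $\|f\|_p^p=|f(0)|^p+\frac{p^2}{2}\int_\bd|f(w)|^{p-2}|f'(w)|^2\log\frac1{|w|}\,\rd A(w)$.

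Second, I would apply this special identity with $f\circ\p$ in place of $f$. Since $\p$ is an analytic self-map, Littlewood's subordination principle guarantees $f\circ\p\in H^p$, and the chain rule gives $|(f\circ\p)'(z)|^2=|f'(\p(z))|^2|\p'(z)|^2$. Writing $G:=|f|^{p-2}|f'|^2\ge 0$, this produces $\|f\circ\p\|_p^p=|f(\p(0))|^p+\frac{p^2}{2}\int_\bd(G\circ\p)(z)|\p'(z)|^2\log\frac1{|z|}\,\rd A(z)$, so the lemma follows once I prove the substitution formula $\int_\bd(G\circ\p)(z)|\p'(z)|^2\log\frac1{|z|}\,\rd A(z)=\int_\bd G(w)N(\p,w)\,\rd A(w)$. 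For this I would use $\log\frac1{|z|}=\int_{|z|}^1\frac{\rd r}{r}$, which gives $N(\p,w)=\int_0^1 n_r(\p,w)\frac{\rd r}{r}$ with $n_r(\p,w)=\#\{z\in\p^{-1}\{w\}:|z|<r\}$; the classical area (change-of-variables) formula $\int_{|z|<r}(G\circ\p)|\p'|^2\,\rd A=\int_\bd G(w)\,n_r(\p,w)\,\rd A(w)$, combined with Fubini over the region $\{|z|<r<1\}$, then delivers the claimed equality. Note that taking $\p$ to be the identity, where $N(\p,w)=\log\frac1{|w|}$, recovers the special case, confirming consistency.

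The main obstacle is technical rather than conceptual: Green's theorem demands $u\in C^2$ up to $\bt$, whereas $|f|^p$ fails to be $C^2$ at the zeros of $f$ (indeed $|f|^{p-2}$ blows up there when $p<2$) and $f$ need not extend smoothly to $\overline{\bd}$. I would resolve this by the standard dilation device, replacing $f$ by $f_\rho(z)=f(\rho z)$, which is analytic on a neighborhood of $\overline{\bd}$, excising small disks around the finitely many zeros of $f_\rho$ in $\overline{\bd}$ and verifying that the line integrals over the excised circles vanish as their radii shrink (this rests on the local integrability of $|f|^{p-2}|f'|^2$), and finally letting $\rho\to1$ through monotone or dominated convergence. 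The same dilation and nonnegativity of $G$ justify Fubini and the area formula in the counting-function step, so no further care is needed once the special case is secured.
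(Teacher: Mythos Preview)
Your outline is correct and is the standard derivation of the Littlewood--Paley/Stanton formula: Green's identity gives the special case, and the non-univalent change of variables $\int_\bd (G\circ\p)|\p'|^2\log\tfrac1{|\cdot|}\,\rd A=\int_\bd G\,N(\p,\cdot)\,\rd A$ (obtained by slicing $\log\tfrac1{|z|}=\int_{|z|}^1\tfrac{\rd r}{r}$ and applying the area formula on each $r\bd$) promotes it to the general one. Your handling of the two technical issues---the failure of $|f|^p$ to be $C^2$ at zeros when $p<2$, and the lack of boundary regularity---via dilation $f_\rho$ and excision of small disks is exactly the right remedy, and the monotone passage $\rho\to1$ is justified by nonnegativity of the integrand.

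For comparison: the paper does not prove this lemma at all; it records it as a known ``fundamental formula'' and cites \cite[p.~2336]{S} (see also \cite{JS}). So there is nothing to compare at the level of argument---you have supplied the proof the paper omits, and your route is the classical one found in those references.
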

This is used to establish two precise estimates for $\|\p \|_p$
extending \cite[(2.8)]{L}.

\begin{lem}\label{l42} If $\p$ is an analytic self-map of $\bd$ with $\p(0)=0$,
then

\item{\rm(i)}
$$
\|\p\|_{2}^2=2\int_{\mathbb D}N(\p,w)\rd A(w).
$$
\item{\rm(ii)}
\begin{equation*}
N(\p,z)\le \frac{4}{\log
2}\|\p\|^2_{2}\log\frac1{|z|}\quad\forall\quad
z\in\bd\setminus\frac{1}{2}\bd.
\end{equation*}
\end{lem}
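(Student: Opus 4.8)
The plan is to read part (i) directly off Lemma \ref{l41} and to obtain part (ii) from a sub-mean value property of the counting function combined with a comparison against a harmonic majorant. For (i), I would apply Lemma \ref{l41} with the identity function $f(w)=w$ and $p=2$. Then $f\circ\p=\p$ and $f(\p(0))=\p(0)=0$, while $|f(w)|^{p-2}=1$ and $|f'(w)|^2=1$; since $p^2/2=2$, the formula collapses to $\|\p\|_2^2=2\int_\bd N(\p,w)\,\rd A(w)$, which is exactly (i).

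For (ii) the first step is to record that, because $\p(0)=0$, the Nevanlinna counting function $N(\p,\cdot)$ is nonnegative and subharmonic on $\bd\setminus\{0\}$, and that $N(\p,w)\to 0$ as $|w|\to 1$ (both are standard facts for the counting function, cf. \cite{CM, S}; the latter also follows from Littlewood's inequality $N(\p,w)\le\log\frac1{|w|}$). The decisive observation is that the target majorant $w\mapsto\frac{4}{\log 2}\|\p\|_2^2\log\frac1{|w|}$ is harmonic on $\bd\setminus\{0\}$, so the idea is to compare $N(\p,\cdot)$ with it via the maximum principle on the annulus $\{1/2\le|w|<1\}$. Two boundary inputs are needed: on $|w|=1$ both functions tend to $0$, and on the inner circle $|w|=1/2$ I must show $N(\p,w)\le\frac{4}{\log 2}\|\p\|_2^2\log 2=4\|\p\|_2^2$.

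This inner estimate is where part (i) re-enters. For $|w|=1/2$ and $\rho<\tfrac12$, the disk $D(w,\rho)$ lies in $\bd$ and excludes the origin, so the sub-mean value inequality (with normalized area, so that the $\rd A$-measure of $D(w,\rho)$ equals $\rho^2$) gives $N(\p,w)\le\rho^{-2}\int_{D(w,\rho)}N(\p,\zeta)\,\rd A(\zeta)\le\rho^{-2}\int_\bd N(\p,\zeta)\,\rd A(\zeta)$; letting $\rho\uparrow\tfrac12$ and invoking (i) yields $N(\p,w)\le 4\cdot\tfrac12\|\p\|_2^2=2\|\p\|_2^2$, which is even better than the required $4\|\p\|_2^2$. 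With the two boundary bounds in hand, $N(\p,\cdot)-\frac{4}{\log 2}\|\p\|_2^2\log\frac1{|\cdot|}$ is subharmonic on the annulus and has nonpositive limit superior on its boundary, so the maximum principle delivers the claimed inequality throughout $\bd\setminus\tfrac12\bd$.

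I expect the main obstacle to be the justification of the subharmonicity (sub-mean value property) of $N(\p,\cdot)$ away from $\p(0)$; this is the one genuinely nontrivial ingredient, and I would import it from the counting-function literature cited above rather than reprove it. The naive alternative, namely applying the sub-mean value inequality directly at a general $z$ with $|z|\ge 1/2$ and the largest admissible radius $1-|z|$, only produces $N(\p,z)\lesssim\|\p\|_2^2(1-|z|)^{-2}$, which blows up as $|z|\to 1$ and is far too weak; routing the estimate through the harmonic comparison on the annulus is precisely what converts the crude $(1-|z|)^{-2}$ into the correct $\log\frac1{|z|}$ decay.
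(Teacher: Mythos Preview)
Your proposal is correct and follows essentially the same route as the paper: part (i) by specializing Lemma \ref{l41} to $f(z)=z$, $p=2$; part (ii) by combining a sub-mean value bound on $|w|=1/2$ (via (i)) with a harmonic comparison against $\frac{4}{\log 2}\|\p\|_2^2\log\frac1{|\cdot|}$ on the annulus. The only cosmetic difference is that the paper carries out the maximum-principle step for the truncated counting function $N_r(\p,\cdot)$ and then lets $r\to 1$, whereas you work with $N(\p,\cdot)$ directly and defer the subharmonicity to the cited literature; your inner-circle bound $2\|\p\|_2^2$ is in fact sharper than the paper's $4\|\p\|_2^2$, but this is immaterial.
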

\begin{proof} (i) This follows from taking $f(z)=z$ in Lemma \ref{l41}.

(ii) Given $w\in\mathbb D\setminus\{0\}$. Using (i) and the submean
value property of $N(\p ,\cdot)$, we get
$$
N(\p ,w)\le \frac{1}{(1-|w|)^2}\int_{|z-w|<1-|w|}N(\p,z)\rd A(z)\le
\frac{\|\p \|_p^p}{(1-|w|)^2}.
$$
Consequently,
$$
N_r(\p,w)\le N(\p,w)\le 4\|\p \|_{2}^2\quad\hbox{for}\quad
|w|=1/2.
$$
Note that the well-known Littlewood's inequality ensures that
$N_r(\p ,w)$ approaches zero uniformly as $|w|\to 1$. Note also that
$N_r(\p ,w)$ is subharmonic. So it is bounded above by the harmonic
function on the annulus $\{w\in\mathbb D: 1/2<|w|<1\}$ having these boundary
values. Hence,
$$
N(\p ,w)=\lim_{r\to 1}N_r(\p ,w)\le \frac{4}{\log 2}\|\p \|_{2}^2
\log \frac1{|w|}\quad\hbox{for}\quad 1/2<|w|<1.
$$
\end{proof}

As the generalization of an $H^2$ composition result in \cite{L},
the following splitting inequality essentially improves the well-known sub-ordination principle for $H^p$ with $p\ge 2$.

\begin{thm}\label{t41} Let $2\le p<\infty$. Then
\begin{equation*}
\|f\circ\p\|_{p}\lesssim\|f\|_{p}\|\p\|^{2/p}_{p}
\end{equation*}
holds for all $f\in H^p$ and analytic self-maps $\p$ of  $\,
\mathbb{D}$ with $f(0)=\p(0)=0$.
\end{thm}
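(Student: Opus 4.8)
The plan is to reduce everything to the counting-function formula of Lemma \ref{l41} and to compare the two weights $N(\p,\cdot)$ and $\log(1/|\cdot|)$ appearing in its two displays. Since $f(0)=\p(0)=0$ forces $f(\p(0))=f(0)=0$, the first identity of Lemma \ref{l41} collapses to
\begin{equation*}
\|f\circ\p\|_p^p=\frac{p^2}{2}\int_\bd |f(w)|^{p-2}|f'(w)|^2\,N(\p,w)\,\rd A(w),
\end{equation*}
while the second identity, applied to $f$ itself, gives
\begin{equation*}
\|f\|_p^p=\frac{p^2}{2}\int_\bd |f(w)|^{p-2}|f'(w)|^2\,\log\frac1{|w|}\,\rd A(w).
\end{equation*}
Thus it suffices to dominate the first integrand by a constant multiple of $\|\p\|_p^{2}$ times the second integrand, i.e.\ to produce a pointwise bound on $N(\p,w)$ in terms of $\log(1/|w|)$.

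First I would establish the clean pointwise estimate
\begin{equation*}
N(\p,w)\lesssim\|\p\|_2^2\,\log\frac1{|w|}\qquad(w\in\bd\setminus\{0\}).
\end{equation*}
For $|w|>1/2$ this is exactly Lemma \ref{l42} (ii). For $0<|w|\le 1/2$ the submean value inequality for the subharmonic function $N(\p,\cdot)$, together with Lemma \ref{l42} (i), gives $N(\p,w)\lesssim (1-|w|)^{-2}\int_\bd N(\p,z)\,\rd A(z)\lesssim\|\p\|_2^2$; and since $\log(1/|w|)\ge\log 2$ on this region, the displayed inequality persists. The exceptional point $w=0$ is negligible for the area integral.

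Next I would insert this pointwise bound into the formula for $\|f\circ\p\|_p^p$ and recognize the resulting integral as $\|f\|_p^p$ via the second display above, obtaining
\begin{equation*}
\|f\circ\p\|_p^p\lesssim\|\p\|_2^2\cdot\frac{p^2}{2}\int_\bd |f(w)|^{p-2}|f'(w)|^2\log\frac1{|w|}\,\rd A(w)=\|\p\|_2^2\,\|f\|_p^p.
\end{equation*}
Finally, the hypothesis $p\ge 2$ enters through the monotonicity of Hardy norms, $\|\p\|_2\le\|\p\|_p$, yielding $\|\p\|_2^2\le\|\p\|_p^2$ and hence $\|f\circ\p\|_p^p\lesssim\|f\|_p^p\,\|\p\|_p^2$, which is the claimed estimate after taking $p$-th roots.

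The computations are otherwise routine; the one place demanding care---the main obstacle---is the pointwise comparison $N(\p,w)\lesssim\|\p\|_2^2\log(1/|w|)$ near the origin, where Lemma \ref{l42} (ii) does not apply and one must instead combine the submean value property with the boundedness of $\log(1/|w|)$ away from the origin. I would also stress that $p\ge 2$ is used twice and genuinely: it makes the exponent $p-2$ nonnegative so that $|f(w)|^{p-2}$ is harmless, and it makes the norm comparison $\|\p\|_2\le\|\p\|_p$ available---both of which fail for $1\le p<2$.
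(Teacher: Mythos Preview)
Your overall strategy is sound and close to the paper's, but the key pointwise estimate
\[
N(\p,w)\lesssim\|\p\|_2^2\,\log\frac{1}{|w|}\qquad(0<|w|\le 1/2)
\]
is false, and the argument you give for it breaks down. The Nevanlinna counting function $N(\p,\cdot)$ is subharmonic only on $\bd\setminus\{\p(0)\}=\bd\setminus\{0\}$; indeed $N(\p,0)=+\infty$ since $0\in\p^{-1}\{0\}$. For $0<|w|\le 1/2$ the disk $B(w,1-|w|)$ you average over contains the origin in its interior, so the submean value inequality is not available. A concrete counterexample: take $\p(z)=\epsilon z$ with $\epsilon>0$ small. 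Then $\|\p\|_2^2=\epsilon^2$, while for $|w|=\epsilon/2$ one has $N(\p,w)=\log(\epsilon/|w|)=\log 2$. Your claimed bound would give $\log 2\lesssim\epsilon^2\log(2/\epsilon)$, which fails as $\epsilon\to 0$.

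The paper avoids this by \emph{not} seeking a pointwise control of $N(\p,\cdot)$ on $\tfrac12\bd$. Instead it bounds the other factor: on $\tfrac12\bd$ the $H^p$ growth estimate $|f(z)|+(1-|z|)|f'(z)|\lesssim\|f\|_p(1-|z|)^{-1/p}$ gives $|f(w)|^{p-2}|f'(w)|^2\lesssim\|f\|_p^p$, after which Lemma~\ref{l42}(i) yields
\[
\int_{\frac12\bd}|f|^{p-2}|f'|^2 N(\p,\cdot)\,\rd A\lesssim\|f\|_p^p\int_{\bd}N(\p,\cdot)\,\rd A\lesssim\|f\|_p^p\,\|\p\|_2^2.
\]
Your treatment of the region $|w|>1/2$ via Lemma~\ref{l42}(ii), and the final passage from $\|\p\|_2$ to $\|\p\|_p$, are correct and match the paper. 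The fix is simply to replace the pointwise bound near the origin by this integral argument.
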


\begin{proof} According to Lemma \ref{l41}, we have
$$
\|f\circ\p\|^p_{p}=\frac{p^2}{2}\int_\bd |f(w)|^{p-2} |f'(w)|^2
N(\p,w)\,\rd
A(w)=:\int_{\frac{1}{2}\bd}+\int_{\bd\setminus\frac{1}{2}\bd}.
$$
Using the Cauchy integral formula and H\"{o}lder's inequality, we
get
$$
|f(z)|+(1-|z|)|f'(z)|\lesssim{\|f\|_{p}}(1-|z|)^{-\frac1p}\quad\forall\quad
z\in\mathbb D.
$$
Thus by Lemma \ref{l42} (i) and $p\ge 2$, we find
\begin{align*}
&\int_{\frac{1}{2}\bd}=\frac{p^2}{2}\int_{\frac{1}{2}\bd}
|f(w)|^{p-2}
|f'(w)|^2 N(\p,w)\,\rd A(w)\\
&\lesssim\|f\|_{p}^p\int_{\frac{1}{2}\bd}
N(\p,w){(1-|w|)^{-p-2}}\,\rd A(w)\\
&\lesssim\|f\|_{p}^p\int_{\frac{1}{2}\bd} N(\p,w)\,\rd
A(w)\\
&\lesssim \|\p\|^2_{2}\|f\|_{p}^p
\end{align*}
Meanwhile, Lemma \ref{l42}(ii) implies
\begin{align*}
&\int_{\bd\setminus\frac{1}{2}\bd}=\frac{p^2}{2}\int_{\bd\setminus\frac{1}{2}\bd}
|f(w)|^{p-2}
|f'(w)|^2 N(\p,w)\,\rd A(w)\\
&\lesssim\|\p\|^2_{2}\int_{\bd\setminus\frac{1}{2}\bd}
|f(w)|^{p-2} |f'(w)|^2 \log\frac1{|w|}\,\rd A(w)\\
&\lesssim\|\p\|^2_{2}\|f\|_{p}^p\, .
\end{align*}
Now, since $p\ge 2$ the above estimates and the H\"older inequality are utilized to derive
$$
\|f\circ\p\|^p_{p}\lesssim\|\p\|^2_{2}\|f\|_{p}^p\lesssim\|\p\|^2_{p}\|f\|_{p}^p,
$$
as desired.
\end{proof}

We have the following assertion which covers Theorem \ref{t11} (iii)
and implies that $\al_{\pe}$ embeds into $\al_{q,\lambda}$ under
$p\ge q=2$ and $(1-\lambda)p\ge(1-\eta)q$.

\begin{thm}\label{t42} Let $0<\eta,\lambda<2=q\le p<\infty$ and $\p $ be an analytic self-map of $\mathbb D$. Then
$$
\|\cp f\|_{\mathcal{AL}_{q,\lambda,\ast}}\lesssim\|f\|_{\mathcal{AL}_{p,\eta,\ast}}\quad\forall\quad f\in\al_{p,\eta}
$$
when and only when
$$
\sup_{w\in
\bd}\frac{(1-|w|^2)^{(1-\lambda)/q}}{(1-|\p(w)|^2)^{(1-\eta)/p}}
\|\sigma_{\p(w)}\circ\p\circ\sw\|_{q}<\infty.
$$
\end{thm}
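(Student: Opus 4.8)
The plan is to exploit the M\"obius characterization from Theorem \ref{t11}(i), which expresses membership in the analytic Campanato spaces through the quantities $\|f\|_{\mathcal{AL}_{p,\eta,\ast}}=\sup_w (1-|w|^2)^{(1-\eta)/p}\|f\circ\sigma_w-f(w)\|_p$. The key algebraic observation is that the composition $C_\p f=f\circ\p$ can be recentered: for a fixed $w\in\bd$, write $v=\p(w)$ and insert the M\"obius self-inverse $\sigma_v\circ\sigma_v=\mathrm{id}$ to obtain
\begin{equation*}
f\circ\p\circ\sigma_w - f(\p(w)) = (f\circ\sigma_v - f(v))\circ(\sigma_v\circ\p\circ\sigma_w).
\end{equation*}
Setting $g=f\circ\sigma_v-f(v)$ and $\psi=\sigma_v\circ\p\circ\sigma_w$, we have an analytic self-map $\psi$ of $\bd$ with $g(0)=0$ and $\psi(0)=\sigma_v(\p(w))=\sigma_v(v)=0$, so the normalized composition estimate of Theorem \ref{t41} applies.

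First I would establish sufficiency. Fix $w$, set $v=\p(w)$, and apply Theorem \ref{t41} with $q=2\le p$ to the functions $g$ and $\psi$ above, giving $\|g\circ\psi\|_q\lesssim \|g\|_q\,\|\psi\|_q^{2/p}$; here one uses that $\|g\|_q\le \|g\|_p$ since $q=2\le p$ on $\bt$ with normalized measure. Multiplying by $(1-|w|^2)^{(1-\lambda)/q}$ and using $(1-|v|^2)^{(1-\eta)/p}\|g\|_p\le \|f\|_{\mathcal{AL}_{p,\eta,\ast}}$, the factor reorganizes as
\begin{equation*}
(1-|w|^2)^{\frac{1-\lambda}{q}}\|g\circ\psi\|_q \lesssim \|f\|_{\mathcal{AL}_{p,\eta,\ast}}\,\frac{(1-|w|^2)^{(1-\lambda)/q}}{(1-|v|^2)^{(1-\eta)/p}}\|\psi\|_q^{2/p}.
\end{equation*}
Since $\|\psi\|_q^{2/p}=\|\sigma_{\p(w)}\circ\p\circ\sigma_w\|_q^{2/p}$ and $q=2$, the finiteness of the displayed supremum condition (raised to the power $2/p$, which is harmless for a bounded quantity) yields $\|C_\p f\|_{\mathcal{AL}_{q,\lambda,\ast}}\lesssim\|f\|_{\mathcal{AL}_{p,\eta,\ast}}$. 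A technical point to verify carefully is that the supremum condition as stated (with exponent $1$ on $\|\sigma_{\p(w)}\circ\p\circ\sigma_w\|_q$) matches the $2/p$ power; I expect the condition controls a bounded quantity so that any fixed positive power is equivalent, but this normalization must be checked against the exact powers in Theorem \ref{t41}.

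For necessity, I would test the boundedness hypothesis on a suitable family of functions that realizes the $\mathcal{AL}_{p,\eta}$ norm near a prescribed boundary direction. The natural candidates are the functions built from $\sigma_v$ itself: for fixed $v\in\bd$, take $f_v(z)=(1-|v|^2)^{(\eta-1)/p}(\sigma_v(z)-\text{const})$ or an analytic "bump" of the type used in the Morrey/Bloch literature, normalized so that $\|f_v\|_{\mathcal{AL}_{p,\eta,\ast}}\lesssim 1$ uniformly in $v$ while $\|f_v\circ\sigma_v-f_v(v)\|_p\gtrsim 1$. Plugging such $f_v$ (with $v=\p(w)$) into the assumed inequality and evaluating the left side at the specific $w$ recovers exactly the quantity $(1-|w|^2)^{(1-\lambda)/q}(1-|\p(w)|^2)^{-(1-\eta)/p}\|\sigma_{\p(w)}\circ\p\circ\sigma_w\|_q$ up to constants, forcing the supremum to be finite. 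The main obstacle will be constructing this test family so that both the uniform upper norm bound and the sharp lower bound on the recentered composition hold simultaneously; controlling $\|f_v\|_{\mathcal{AL}_{p,\eta,\ast}}$ uniformly requires a careful choice, likely using the representation theory underlying Theorem \ref{t21} together with the extremal role of the M\"obius maps, and this is where the bulk of the careful estimation lies.
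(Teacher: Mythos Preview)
Your overall strategy matches the paper's proof: recenter via $\sigma_{\p(w)}$, apply the splitting inequality of Theorem~\ref{t41}, and for necessity test on a one-parameter family peaked at $\p(w)$. Two points need correction or sharpening.

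\textbf{Sufficiency: the exponent in Theorem~\ref{t41}.} You apply Theorem~\ref{t41} ``with $q=2\le p$'' but write $\|g\circ\psi\|_q\lesssim\|g\|_q\|\psi\|_q^{2/p}$. That is not what the theorem gives: Theorem~\ref{t41} is a statement at a single exponent, so applying it at exponent $q$ yields $\|g\circ\psi\|_q\lesssim\|g\|_q\|\psi\|_q^{2/q}$. Since $q=2$, the power on $\|\psi\|_q$ is exactly $1$, and after invoking $\|g\|_q\le\|g\|_p$ (H\"older on $\bt$) you obtain precisely
\[
(1-|w|^2)^{(1-\lambda)/q}\|g\circ\psi\|_q \lesssim \|f\|_{\mathcal{AL}_{p,\eta,\ast}}\cdot\frac{(1-|w|^2)^{(1-\lambda)/q}}{(1-|\p(w)|^2)^{(1-\eta)/p}}\|\sigma_{\p(w)}\circ\p\circ\sigma_w\|_q,
\]
which matches the stated condition with no normalization discrepancy. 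Your hedging about ``raising to the power $2/p$'' is unnecessary once the exponent is applied correctly; this is exactly how the paper proceeds.

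\textbf{Necessity: choice of test family.} Your candidate $f_v=(1-|v|^2)^{(\eta-1)/p}\sigma_v$ does make the lower bound transparent (since $f_v\circ\p\circ\sigma_w-f_v(\p(w))=(1-|v|^2)^{(\eta-1)/p}\sigma_v\circ\p\circ\sigma_w$ when $v=\p(w)$), but verifying $\sup_v\|f_v\|_{\mathcal{AL}_{p,\eta,\ast}}<\infty$ for general $p>2$ is not immediate. The paper instead uses the kernel family
\[
F_b(z)=\frac{(1-|b|^2)^{(p+\eta-1)/p}}{1-\bar b z},
\]
for which both the uniform upper bound $\|F_b\|_{\mathcal{AL}_{p,\eta,\ast}}\lesssim 1$ and the lower bound (specializing $b=\p(w)$) reduce to standard estimates on $\int_\bt|1-\bar a\zeta|^{-p}\,|\rd\zeta|$. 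This is the cleaner route and avoids the ``main obstacle'' you anticipate.
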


\begin{proof}  If $f\in \al_{\pe}$ and $2=q\le p$, then an application of Theorem \ref{t41}, plus H\"older's inequality, derives
\begin{align*}
&\|\cp(f)\|^q_{\mathcal{AL}_{q,\lambda,*}}\approx\sup_{w\in\bd}(1-|w|^2)^{1-\lambda}\|f\circ\p\circ\sw-f(\p(w))\|^q_{q}\\
&\approx\sup_{w\in\bd}(1-|w|^2)^{1-\lambda}\|(f\circ\sigma_{\p(w)}-f(\p(w)))
\circ(\sigma_{\p(w)}\circ\p\circ\sw)\|^q_{q}\\
&\lesssim\sup_{w\in\bd}(1-|w|^2)^{1-\lambda}\|f\circ\sigma_{\p(w)}-f(\p(w))\|^q_{p}
\|\sigma_{\p(w)}\circ\p\circ\sw\|^q_{q}\\
&\lesssim \sup_{w\in
\bd}\frac{(1-|w|^2)^{1-\lambda}}{(1-|\p(w)|^2)^{q(1-\eta)/p}}
\|\sigma_{\p(w)}\circ\p\circ\sw\|^q_{q}\|f\|^q_{\pe,*}
\end{align*}
and hence the forward implication holds.

To verify the backward implication, for $b,z\in\mathbb D$ let
$f_b(z)=\frac{1}{1-\bar{b}z}$. Then

\begin{align*}
&\|f_b\|^p_{\mathcal{AL}_{p,\eta,*}}\approx\sup_{w\in\bd}(1-|w|^2)^{1-\eta}\|f_b\circ\sw-f_b(w)\|^p_{p}\\
&\approx\sup_{w\in\bd}(1-|w|^2)^{1-\eta}\int_\bt|f_b(\zeta)-f_b(w)|^p\frac{1-|w|^2}{|\zeta-w|^2}\,|\rd
\zeta|\\
&\approx\sup_{w\in\bd}(1-|w|^2)^{1-\eta}\int_\bt\left|
\frac{\bar{b}(\zeta-w)}{(1-\bar{b}\zeta)(1-\bar{b}w)}\right|^p\frac{1-|w|^2}{|\zeta-w|^2}\,|\rd
\zeta|\\
&\approx\sup_{w\in\bd}\frac{|b|^p(1-|w|^2)^{1-\eta}}{|1-\bar{b}w|^p}\int_\bt
\left|\frac{\zeta-w}{1-\bar{b}\zeta}\right|^p\frac{1-|w|^2}{|\zeta-w|^2}\,|\rd \zeta|\\
&\approx\sup_{w\in\bd}\frac{|b|^p(1-|w|^2)^{1-\eta}}{|1-\bar{b}w|^p}\int_\bt
\left|\frac{w-\sw(\zeta)}{1-\bar{b}\sw(\zeta)}\right|^p\,|\rd \zeta|\\
&\approx\sup_{w\in\bd}\frac{|b|^p(1-|w|^2)^{1-\eta}}{|1-\bar{b}w|^p}\int_\bt
\left|\frac{1-|w|^2}{1-\bar b w-\bar{w}\zeta+\bar b\zeta}\right|^p\,|\rd \zeta|\\
&\approx\sup_{w\in\bd}\frac{|b|^p(1-|w|^2)^{p+1-\eta}}{|1-\bar{b}w|^{2p}}\int_\bt
\left|\frac{1}{1-\sigma_{\bar b}(\bar w)\zeta}\right|^p\,|\rd \zeta|\\
&\lesssim\sup_{w\in\bd}\frac{|b|^p(1-|w|^2)^{p+1-\eta}}{|1-\bar{b}w|^{2p}}
{(1-|\sigma_{\bar b}(\bar w)|)^{1-p}}\\
 &\approx\sup_{w\in\bd}\frac{|b|^p(1-|w|^2)^{2-\eta}}{|1-\bar{b}w|^{2}(1-|b|^2)^{p-1}}\\
\end{align*}
If
$$
F_b(z)={(1-|b|^2)^\frac{p+\eta-1}{p}}/{(1-\bar b
z)}\quad\forall\quad z\in\mathbb D,
$$
then
$$
\|F_b\|^p_{\mathcal{AL}_{p,\eta,*}}\lesssim
\sup_{w\in\bd}\frac{|b|^p(1-|w|^2)^{2-\eta}(1-|b|^2)^{\eta}}{|1-\bar{b}w|^{2}
}\lesssim 1,
$$
and hence $F_b$ is a bounded function in $\al_{\pe}$ with a bound being independent of $b$.

Now, let
$$
\|C_\p f\|_{\mathcal{AL}_{q,\lambda,\ast}}\lesssim\|f\|_{\mathcal{AL}_{p,\eta,\ast}}\quad\forall\quad f\in\mathcal{AL}_{p,\eta}.
$$
Then for $\tau:=(p+\eta-1)/p$ one has
\begin{align*}
&1\gtrsim\|F_b\|^{q}_{\mathcal{AL}_{p,\eta,\ast}}\\
&\gtrsim\|C_\p F_b\|^q_{\mathcal{AL}_{q,\lambda,\ast}}\\
&\approx\sup_{b,w\in\bd}(1-|w|^2)^{1-\lambda}\int_\bt
|F_b\circ\p\circ\sw-F_b(\p(w))|^q\,|\rd\zeta|\\
&\approx\sup_{b,w\in\bd}(1-|w|^2)^{1-\lambda}
\int_\bt |F_b\circ\p-F_b(\p(w))|^q |\sigma'_w(\zeta)|\,|\rd\zeta|\\
&\approx\sup_{b,w\in\bd}(1-|w|^2)^{1-\lambda}\int_\bt
\left|\frac{(1-|b|^2)^\tau}{(1-\bar b \p(\zeta))}-
\frac{(1-|b|^2)^\tau}{(1-\bar b \p(w))}\right|^q |\sigma'_w(\zeta)|\,|\rd\zeta|\\
&\approx\sup_{b,w\in\bd}\frac{|b|^q(1-|w|^2)^{1-\lambda}(1-|b|^2)^{\tau q}}{|1-\bar b \p(w)|^q}\int_\bt \left|\frac{\p(\zeta)-\p(w)}{(1-\bar b \p(\zeta))} \right|^q
|\sigma'_w(\zeta)|\,|\rd\zeta|\\
&\gtrsim\sup_{w\in\bd}\frac{|\p(w)|^q(1-|w|^2)^{1-\lambda}}{(1-| \p(w)|^2)^{q(1-\tau)}}\int_\bt \left|\sigma_{\p(w)}\circ\p \right|^q|\sigma'_w(\zeta)|\,|\rd\zeta|\\
&\approx\sup_{w\in\bd}\frac{|\p(w)|^q(1-|w|^2)^{1-\lambda}}{(1-|\p(w)|^2)^{q(1-\tau)}}\|\sigma_{\p(w)}\circ\p\circ\sw\|^q_{q},
\end{align*}
and hence the desired implication follows.

\end{proof}

\subsection{Actions between analytic Campanato and Bloch type spaces}\label{s42} The
previous discussion leads to a consideration of the actions of $C_\p
$ sending $\mathcal{AL}_{p,\eta}$ to $\mathcal{B}_\alpha$ and its
converse. To do so, we need the following existence result.

\begin{lem}\label{l43} For $0<\alpha<\infty$, there are two functions $f_1,f_2$ in
$\mathcal{B}_\alpha$ such that
$$
(1-|z|^2)^{2\alpha}\big(|f_1'(z)|^2+|f_2'(z)|^2\big)\approx
1\quad\forall\quad z\in\mathbb D.
$$
\end{lem}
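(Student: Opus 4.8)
The goal is to produce, for each prescribed exponent $\alpha>0$, a pair of Bloch-type functions $f_1,f_2\in\mathcal B_\alpha$ whose derivatives never vanish simultaneously in a quantitatively controlled way, namely $(1-|z|^2)^{2\alpha}(|f_1'(z)|^2+|f_2'(z)|^2)\approx 1$ throughout $\bd$. The plan is to build $f_1,f_2$ explicitly from lacunary-type power series and to verify the two-sided estimate by comparing the modulus of the derivative against the model weight $(1-|z|^2)^{-\alpha}$.

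First I would reduce the problem to constructing a single function $h\in\mathcal B_\alpha$ with $(1-|z|^2)^\alpha|h'(z)|\approx 1$, i.e.\ an $h$ whose derivative is comparable to $(1-|z|^2)^{-\alpha}$ from both sides; a natural candidate is $h'(z)=(1-z)^{-\alpha}$ (so $h$ is, up to a constant, $(1-z)^{1-\alpha}$ or a logarithm when $\alpha=1$). This $h$ manifestly lies in $\mathcal B_\alpha$ since $(1-|z|^2)^\alpha|1-z|^{-\alpha}$ is bounded, and it gives the upper bound on $|h'|$ immediately. The trouble is that $(1-z)^{-\alpha}$ is small when $z$ is near the boundary point $-1$ (indeed near any $\zeta$ with $|1-\zeta|$ large), so a single such $h$ cannot keep $(1-|z|^2)^\alpha|h'(z)|$ bounded \emph{below}. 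This is exactly why two functions are needed: I would take $f_1'(z)=(1-z)^{-\alpha}$ and $f_2'(z)=(1+z)^{-\alpha}$, so that where one derivative degenerates the other is comparable to $(1-|z|^2)^{-\alpha}$.

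The key step is then the lower estimate. For $z$ in the half of $\bd$ closer to the boundary point $1$, one has $|1-z|\lesssim 1-|z|^2$ up to the geometry of the disk, so $(1-|z|^2)^\alpha|f_1'(z)|\gtrsim 1$; symmetrically, for $z$ closer to $-1$, the term $f_2'$ takes over. More precisely I would split $\bd$ according to $\Re z\ge 0$ versus $\Re z\le 0$ and observe that on $\{\Re z\ge0\}$ we have $|1-z|\le\sqrt2\,(1-|z|^2)^{1/2}\cdot$(bounded factor) is the wrong direction — rather $|1-z|^2\le |1-z|^2+\,|\Im z|^2$ controls things so that $|1-z|\approx (1-|z|^2)$ is false in general, and the honest bound is $|1-z|\le 2$ together with $|1+z|\gtrsim 1$, giving $(1-|z|^2)^\alpha|f_2'(z)|=(1-|z|^2)^\alpha|1+z|^{-\alpha}\gtrsim(1-|z|^2)^\alpha$, which is not bounded below. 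So the clean way is to note that at least one of $|1-z|,|1+z|$ is $\lesssim (1-|z|^2)$ cannot hold either; the correct comparison is that $\min(|1-z|,|1+z|)\lesssim 1$ always while $\max\lesssim 2$, and what actually saves us is that \emph{both} $|1\pm z|$ stay bounded above by $2$, so each $|f_j'(z)|\gtrsim 2^{-\alpha}$, whence $(1-|z|^2)^{2\alpha}(|f_1'|^2+|f_2'|^2)\gtrsim (1-|z|^2)^{2\alpha}$ — again only a weak bound near the boundary. I therefore expect the genuine obstacle to be producing the lower bound uniformly up to $|z|\to1$, and I would resolve it by choosing the two singularities to sit at boundary points so that for every $z$ the nearer singularity satisfies $|\zeta_j-z|\approx(1-|z|)$; concretely, replacing the fixed points $\pm1$ by a construction in which, for each $z=re^{i\theta}$, one uses the boundary point $e^{i\theta}$ itself through a function whose derivative is $\sum_k 2^{k\alpha}z^{2^k}$ (a lacunary series tuned to have $(1-|z|^2)^\alpha|f'(z)|\approx1$ by the standard lacunary Bloch estimates).

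Accordingly, the cleanest route, and the one I would ultimately carry out, is the lacunary one: set $f_1(z)=\sum_k 2^{k(\alpha-1)}z^{2^k}$ and choose $f_2$ by inserting a rotation or a shifted exponent (e.g.\ $f_2(z)=\sum_k 2^{k(\alpha-1)}\omega_k z^{2^k}$ with unimodular $\omega_k$ chosen so the two partial sums never cancel together) so that $|f_1'(z)|^2+|f_2'(z)|^2\approx (1-|z|^2)^{-2\alpha}$. The upper bound $(1-|z|^2)^\alpha|f_j'(z)|\lesssim1$ is the classical lacunary Bloch-type estimate, and the matching lower bound follows from the Abel-summation / block-decomposition argument for lacunary series: on the annulus $1-2^{-n}\le|z|<1-2^{-n-1}$ the single block $k\approx n$ dominates and forces $(1-|z|^2)^\alpha(|f_1'(z)|+|f_2'(z)|)\gtrsim1$ once the phases of $f_1',f_2'$ are arranged not to vanish simultaneously. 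The main obstacle, as indicated, is precisely this simultaneous non-degeneracy of the two derivatives for \emph{all} $z$, which is why one passes from a single function to a pair and tunes the coefficients $\omega_k$; with that arranged, the two-sided comparison $(1-|z|^2)^{2\alpha}(|f_1'(z)|^2+|f_2'(z)|^2)\approx1$ follows.
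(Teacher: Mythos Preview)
Your final approach --- lacunary series with coefficients $2^{k(\alpha-1)}$ --- is the right one and is exactly what lies behind the paper's proof, which simply cites inequalities (2.2) and (2.4) of \cite[Theorem 2.1.1]{X0}. So in spirit you are aligned with the paper.

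That said, what you have written is a narrated search, not a proof. Most of it is devoted to the $(1\mp z)^{-\alpha}$ idea, which you yourself correctly recognize fails to give the uniform lower bound; this should be cut. The remaining lacunary sketch has a genuine gap at the only nontrivial point, the lower bound. Saying you will take $f_2(z)=\sum_k 2^{k(\alpha-1)}\omega_k z^{2^k}$ with unimodular $\omega_k$ ``chosen so the two partial sums never cancel together'' is not a construction: with the \emph{same} exponent sequence $\{2^k\}$, the dominant block of $f_1'$ and of $f_2'$ on the annulus $1-2^{-n}\le|z|<1-2^{-n-1}$ is the same monomial $z^{2^n-1}$ up to a unimodular factor, so whatever cancellation the lower-order terms cause in $f_1'$ they can equally cause in $f_2'$. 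The standard remedy (as in Ramey--Ullrich for $\alpha=1$ and its weighted variants, which is what \cite{X0} records) is to use two \emph{different} lacunary exponent sequences, or an offset such as $2^k$ versus $2^k+m_k$, so that on every annulus the dominant blocks of $f_1'$ and $f_2'$ have different arguments and cannot be simultaneously killed by the tails. Until you specify that second sequence and carry out the block-domination estimate, the lower bound $(1-|z|^2)^{2\alpha}(|f_1'(z)|^2+|f_2'(z)|^2)\gtrsim 1$ is asserted rather than proved.
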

\begin{proof} This follows immediately from the inequalities (2.2) and (2.4) in the argument for \cite[Theorem 2.1.1]{X0}.
\end{proof}

Below is an alternative to Theorem \ref{t11} (iv).

\begin{thm}\label{t43} Let $0<\eta<1+p<\infty$, $0<\alpha, p-1<\infty$ and $\p $ be an analytic self-map of $\mathbb D$.

\item{\rm(i)}
$$
\|C_\p f\|_{\mathcal{B}_\alpha}\lesssim\|f\|_{\mathcal{AL}_{p,\eta,\star}}\quad\forall\quad f\in \mathcal{AL}_{p,\eta}
$$
if and only if
$$
\sup_{w\in\mathbb D}\frac{(1-|w|^2)^\alpha|\p '(w)|}{(1-|\p
(w)|^2)^\frac{p+1-\eta}{p}}<\infty.
$$

\item{\rm(ii)}
$$
\|C_\p f\|_{\mathcal{AL}_{p,\eta,\star}}\lesssim\|f\|_{\mathcal{B}_\alpha}\quad\forall\quad f\in \mathcal{B}_\alpha
$$
if and only if
$$
\sup_{I\subseteq\bt}\left( \frac1{|I|^\eta} \int_I
\left(\int_{1-|I|}^1 |\p '(r \zeta)|^2 (1-r)^{1-2\alpha}\rd
r\right)^\frac{p}{2}\,\frac{|\rd\zeta|}{2\pi}\right)^\frac{1}{p}<\infty.
$$
\end{thm}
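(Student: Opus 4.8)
The plan is to handle both equivalences in (i) and (ii) by the same two-step scheme: the ``if'' (boundedness) direction through a pointwise estimate on the derivative $(f\circ\p)'=(f'\circ\p)\p'$, and the ``only if'' (necessity) direction by testing $\cp$ against an explicit family of functions. Throughout write $\beta=(p+1-\eta)/p$ and recall from the embedding $\al_{\pe}\subseteq\cb_\beta$ opening \S\ref{s32}, together with the norm equivalence in Theorem \ref{t11}(i), that $\|f\|_{\cb_\beta}\lesssim\|f\|_{\mathcal{AL}_{p,\eta,\star}}$ for every $f\in\al_{\pe}$. For the sufficiency in (i) I would simply compute, for $f\in\al_{\pe}$ and $w\in\bd$,
$$
(1-|w|^2)^\alpha|(f\circ\p)'(w)|=(1-|w|^2)^\alpha|f'(\p(w))||\p'(w)|\lesssim\|f\|_{\mathcal{AL}_{p,\eta,\star}}\,\frac{(1-|w|^2)^\alpha|\p'(w)|}{(1-|\p(w)|^2)^\beta},
$$
so that the stated supremum bound forces $\|\cp f\|_{\cb_\alpha}\lesssim\|f\|_{\mathcal{AL}_{p,\eta,\star}}$.

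For the necessity in (i) I would reuse the functions $F_b(z)=(1-|b|^2)^{(p+\eta-1)/p}(1-\bar b z)^{-1}$ already shown in the proof of Theorem \ref{t42} to be uniformly bounded in $\al_{\pe}$. A direct differentiation gives the clean identity $(1-|b|^2)^\beta|F_b'(b)|=|b|$, so taking $b=\p(w)$ and invoking the assumed boundedness yields
$$
|\p(w)|\,\frac{(1-|w|^2)^\alpha|\p'(w)|}{(1-|\p(w)|^2)^\beta}\lesssim\|\cp F_{\p(w)}\|_{\cb_\alpha}\lesssim 1 .
$$
This controls the claimed supremum on $\{|\p(w)|\ge\tfrac12\}$; on the complementary region $(1-|\p(w)|^2)^\beta\approx 1$, and testing against the bounded function $f(z)=z$ (so that $\cp f=\p$, which lies in $\al_{\pe}$) gives $(1-|w|^2)^\alpha|\p'(w)|\lesssim 1$, closing the remaining gap.

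For (ii) the sufficiency again rests on the Bloch bound $|f'(\p(z))|\lesssim\|f\|_{\cb_\alpha}(1-|\p(z)|^2)^{-\alpha}$. Writing $g=\cp f$ and $z=r\zeta$ this yields $|g'(r\zeta)|^2(1-r)\lesssim\|f\|_{\cb_\alpha}^2|\p'(r\zeta)|^2(1-|\p(r\zeta)|^2)^{-2\alpha}(1-r)$, and the one-sided Schwarz--Pick comparison $1-r\lesssim 1-|\p(r\zeta)|^2$ (with constant depending only on $\p(0)$) converts the denominator into $(1-r)^{2\alpha}$, producing exactly the integrand $|\p'(r\zeta)|^2(1-r)^{1-2\alpha}$; raising to the power $p/2$, integrating over $I$, dividing by $|I|^\eta$ and taking the supremum then gives $\|\cp f\|_{\mathcal{AL}_{p,\eta,\star}}\lesssim\|f\|_{\cb_\alpha}$. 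For the necessity I would invoke Lemma \ref{l43}: the pair $f_1,f_2\in\cb_\alpha$ satisfies $|f_1'|^2+|f_2'|^2\approx(1-|\cdot|^2)^{-2\alpha}$, whence $(|g_1'|^2+|g_2'|^2)(r\zeta)\approx|\p'(r\zeta)|^2(1-|\p(r\zeta)|^2)^{-2\alpha}$ with $g_k=\cp f_k$. Using the elementary inequality $(A+B)^{p/2}\lesssim A^{p/2}+B^{p/2}$ (valid for all $p>0$) to split the two contributions and applying the assumed boundedness to each $f_k$ then bounds the resulting Littlewood--Paley sum over every arc $I$.

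The step I expect to be the main obstacle is precisely the weight reconciliation in (ii). The extremal pair of Lemma \ref{l43} naturally produces the factor $(1-|\p(r\zeta)|^2)^{-2\alpha}$, whereas the asserted condition carries the purely radial weight $(1-r)^{-2\alpha}$. In the sufficiency direction these match through the one-sided estimate $1-r\lesssim 1-|\p(r\zeta)|^2$, but the reverse comparison fails in general, so the necessity direction needs extra care: one must argue that, along the radii actually charged by the Littlewood--Paley integral over $S(I)$, the two weights are comparable, or else reorganize the extremal construction so that the radial weight emerges directly. Pinning down this comparison is the crux; once it is settled, the remaining ingredients --- the repeated use of H\"older's inequality, the $L^p/L^{p'}$ duality of the $G$-function integrals, and the dyadic summations --- follow the template already established in the proofs of Theorems \ref{t22} and \ref{t42}.
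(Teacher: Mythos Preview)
Your plan coincides with the paper's proof almost line for line. For (i) the paper uses exactly the embedding $\al_{\pe}\subset\cb_\beta$ for sufficiency and the same test function (your $F_{\p(w)}$ is the paper's $f_w$) for necessity; your additional splitting into $|\p(w)|\ge\tfrac12$ and $|\p(w)|<\tfrac12$ is a detail the paper leaves implicit. For (ii) the paper's one-line sufficiency is your Bloch-bound-plus-Schwarz--Pick computation, and its necessity invokes Lemma~\ref{l43} together with $U^{p/2}+V^{p/2}\approx(U+V)^{p/2}$ precisely as you propose.

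On the obstacle you flag --- replacing $(1-|\p(r\zeta)|^2)^{-2\alpha}$ by $(1-r)^{-2\alpha}$ in the necessity of (ii) --- the paper offers no further argument. After combining $f_1,f_2$ via Lemma~\ref{l43} to reach
\[
|I|^{-\eta}\int_I\Big(\int_{1-|I|}^1\big(|(f_1\circ\p)'|^2+|(f_2\circ\p)'|^2\big)(1-r)\,\rd r\Big)^{p/2}|\rd\zeta|,
\]
the paper simply writes the next line with weight $(1-r)^{1-2\alpha}$, without comment. So you are not missing a trick: the paper's proof is exactly your outline, and the weight-comparison step you isolate as ``the crux'' is asserted rather than justified there as well. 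Your instinct that this passage requires care (and may in fact fail pointwise, e.g.\ for maps with $\sup_{\bd}|\p|<1$ and $\alpha\ge1$) is sound; the paper's argument does not supply the missing piece.
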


\begin{proof} (i) The growth of functions in $\mathcal{AL}_{p,\eta}$ presented in the beginning of Section \ref{s3} derives that if $f\in \mathcal{AL}_{p,\eta}$ then
$$
\sup_{w\in\mathbb D}{(1-|w|^2)^\alpha|(C_\p
f)'(w)|}\lesssim\|f\|_{\mathcal{AL}_{p,\eta,\star}}\sup_{w\in\mathbb
D}\frac{(1-|w|^2)^\alpha|\p '(w)|}{(1-|\p
(w)|^2)^\frac{p+1-\eta}{p}}
$$
and hence the sufficiency of (i) is true. The necessity of (i) just
follows from a simple calculation with a given point $w\in\mathbb D$
and the test function in $\mathcal{AL}_{p,\eta}$ below:
$$
f_w(z)=\frac{(1-|\p (w)|^2)^{(p+\eta-1)/p}}{1-\overline{\p
(w)}z}\quad\forall\quad z\in\mathbb D.
$$

(ii) The sufficiency can be checked by using the definition of
$\mathcal{B}_\alpha$ and Theorem \ref{t22} (ii). Concerning the
necessity, we utilize those two $\mathcal{B}_\alpha$ functions
$f_1,f_2$ in Lemma \ref{l43} and the elementary estimate
$$
U^{p/2}+V^{p/2}\approx (U+V)^{p/2}\quad\forall\quad U,V\ge 0
$$
to get that if
$$
\|C_\p f\|_{\mathcal{AL}_{p,\eta,\star}}\lesssim\|f\|_{\mathcal{B}_\alpha}\quad\forall\quad f\in \mathcal{B}_\alpha
$$
then
\begin{align*}
&\infty>\|f_1\|_{\mathcal{B}_\alpha}^p+\|f_2\|_{\mathcal{B}_\alpha}^p\\
&\gtrsim\|C_\p  f_1\|_{\mathcal{AL}_{p,\eta,\star}}^p+\|C_\p  f_2\|_{\mathcal{AL}_{p,\eta,\star}}^p\\
&\gtrsim |I|^{-\eta}\int_I\Big(\int_{1-|I|}^1|(f_1\circ\p )'(r\zeta)|^2(1-r)\rd r\Big)^\frac{p}{2}\,|\rd\zeta|\\
&\ \ +|I|^{-\eta}\int_I\Big(\int_{1-|I|}^1|(f_2\circ\p )'(r\zeta)|^2(1-r)\,\rd r\Big)^\frac{p}{2}\,|\rd\zeta|\\
&\gtrsim |I|^{-\eta}\int_I\Big(\int_{1-|I|}^1 \big(|(f_1\circ\p )'(r\zeta)|^2+|(f_2\circ\p )'(r\zeta)|^2\big)(1-r)\,\rd r\Big)^\frac{p}{2}\,|\rd\zeta|\\
&\gtrsim{|I|^{-\eta}} \int_I\left(\int_{1-|I|}^1 |\p '(r \zeta)|^2
(1-r)^{1-2\alpha}\rd r\right)^\frac{p}{2}\,{|\rd\zeta|}
\end{align*}
holds any subarc $I\subseteq\bt$. This implies the desired
inequality.
\end{proof}

\end{document}